\def \R{{\hbox{\vrule width 0.6pt height 6.8pt depth -.2pt\kern-0.2pt
R}}}
\def \P{{\hbox{\vrule width 0.6pt height 6.8pt depth -.2pt\kern-0.2pt
P}}}
\def \R {\mathbb R}
\def \P {\mathbb P}
\def \t {t_0(x_0)}
\def\MOD#1{{|\kern -.16em |\kern -.16em | #1 | \kern -.16em |\kern
 -.16em |}}
\def \epsilon {\varepsilon}
\newtheorem{theo}{\bf THEOREM}[section]
\newtheorem{lem}[theo]{\bf LEMMA}
\newtheorem{pro}[theo]{\bf PROPOSITION}
\newtheorem{rem}[theo]{\bf REMARK}
\newcommand{\eps}{\varepsilon}
\numberwithin{equation}{section}
\title{ \textbf{\Large Single point blow-up and final profile  for a
    perturbed nonlinear heat equation with a gradient  and a non-local
    term} }
\author{\large Bouthaina Abdelhedi\\
   \textit{\small Department of mathematics, Faculty of Sciences of Sfax, Tunisia}\\
   \large Hatem Zaag\\
  \textit {\small Universit\'e Paris 13, Sorbonne Paris Cit\'e},\\
   \textit{\small LAGA, CNRS (UMR 7539), F-93430, Villetaneuse, France}
 }
\begin{document}
\maketitle






\begin{center}
\textit{Dedicated to the memory of Ezzeddine Zahrouni.}
\end{center}

\begin{abstract} We consider in this paper a perturbation of the standard semilinear heat equation by a term involving the space derivative and a non-local term.  In some earlier work \cite{AZ}, we constructed a blow-up solution for that equation, and showed that it blows up (at least) at the origin. We also derived the so called  ``intermediate blow-up profile''. In this paper,  we prove the single point  blow-up property and  determine the final blow-up profile.
\end{abstract}
~~\\
\textbf{ AMS 2010 Classification:}  35B20, 35B44, 35K55.\\
\textbf{Keywords:} Blow-up, nonlinear heat equation, gradient term, non-local term.\\
\section{Introduction}
We consider the problem
\begin{equation}\label{eq_u}
\left \{
\begin{array}{lcl}
 u_{t}&=&\Delta u+|u|^{p-1}u+\mu |\nabla u|\displaystyle \int_{B(0, |x|)}|u|^{q-1},\\
 u(0)&=&u_0\in W^{1, \infty}(\R^N),
\end{array}
\right.
\end{equation}
where $u=u(x,t)\in \R$, $x\in \R^N$ and the parameters $p, q$ and $\mu$ are such that \begin{equation}\label{hyp}
\displaystyle  p>3, \quad \frac{N}{2}(p-1)+1<q<\frac{N}{2}(p-1) +\frac{p+1}{2},\quad \mu \in \R.
\end{equation}
 When $\mu=0$, we recover the standard semilinear heat equation with power nonlinearity,
\begin{equation}\label{equ}
u_t = \Delta u +|u|^{p-1}u,
\end{equation}
 which has attracted a lot of attention in the last 50 years (see the book \cite{QS} by Quittner and Souplet), in particular, as a model for the study of blow-up in PDEs. Although the analysis of \eqref{equ} is far from being trivial, one may feel that such an equation is too idealized, and may not capture a lot of features one may encounter in real life (parabolic) models. For that reason, some authors tried to study perturbations of that equation with different kind of terms, aiming to be closer to more realistic models. In particular, we mention the following perturbation by a nonlinear gradient term
\begin{equation}\label{equ-bis}
u_t = \Delta u +|u|^{p-1}u+\mu|\nabla u|^q,
\end{equation}
first introduced by Chipot and Weissler \cite{chipot}. We also mention the perturbations involving non-local (or integral) terms, as we encounter in PDEs modeling Micro Electro-Mechanical Systems (MEMS):

\begin{equation}\label{equ-ter}
u_{t}=\Delta u+\displaystyle \frac{\lambda}{(1-u)^2(1+\gamma\int_\Omega\frac{1}{1-u}dx)^2},
\end{equation}
where $\frac{1}{1-u}$ may blow up in finite time (see Duong and Zaag \cite{DZ} and the references therein).
Specific difficulties arise in the study of blow-up for both equations \eqref{equ-bis} and \eqref{equ-ter}, as one may see from the constructions of singular solutions in Tayachi and Zaag \cite{TZ} (see also the note \cite{TZ1}) and  Duong and Zaag \cite{DZ}.
\bigskip

In this paper, following our earlier work \cite{AZ} on \eqref{eq_u}, we intend to consider more complicated perturbations of equation \eqref{equ}. Specifically,  since it shows a product between a gradient term and an integral term, we felt that equation \eqref{eq_u} can be the next challenge to study blow-up for parabolic PDEs.
\bigskip

Equation $(\ref{eq_u})$ is wellposed in the weighted  functional  space $W^{1, \infty}_\beta(\R^N)$ defined as follows:
\begin{equation}
 W^{1, \infty}_\beta(\R^N)=\{g, \; (1+|y|^\beta) g\in L^\infty, \;(1+|y|^\beta)\nabla  g\in L^\infty\},
 \end{equation}
where
\begin{equation}\label{beta}
 0\le \beta <\frac{2}{p-1},\;\mbox{if}\;\mu=0\; \mbox{ and }\;\displaystyle \frac{N}{q-1}<\beta<\frac{2}{p-1},\;\mbox{if}\; \mu\neq 0,
\end{equation}
as one may see from Appendix $C$ in \cite{AZ}.\\
In \cite{AZ}, we constructed a solution  $u(x,t)$ for equation $(\ref{eq_u})$ which blows up in finite time $T$ at $a=0$, and we proved that the solution behaves as follows: for all $(x,t)\in \R^N \times [0, T)$,
\begin{equation}\label{b_u}
  \displaystyle \!\!\left|u(x,t)\!-\!(T\!-\!t)^{-\frac{1}{p-1}}f\left(\frac{x}{\sqrt{(T\!-\!t)|\log(T\!-\!t)|}}\right)\right|\!\leq\! \frac{C}{1+(\frac{|x|^2}{T-t})^{\frac{\beta}{2}}}\frac{(T-t)^{-\frac{1}{p-1}}}{|\log(T-t)|^\frac{1-\beta}{2}},
   \end{equation}
   and
\begin{equation}\label{b_nabla_u}
\displaystyle\!\!\! \left|\nabla\! u(x,t)\!-\!\frac{(T\!-\!t)^{\!-\!\frac{1}{2}\!-\!\frac{1}{p\!-\!1}}}{\sqrt{|\log(T\!-\!t)|}}\!\nabla \!f\left(\frac{x}{\sqrt{(T\!-\!t)|\log(T\!-\!t)|}}\right)\right|\!\!\leq\!\! \frac{C}{1\!+\!(\frac{|x|^2}{T\!\!-t})^{\frac{\beta}{2}}}\frac{(T\!-\!t)^{-\frac{1}{2}\!-\!\frac{1}{p\!-\!1}}}{|\log(T\!-\!t)|^\frac{1-\beta}{2}}
\end{equation}
(note that $0\le \beta<1$ from \eqref{beta} and \eqref{hyp}),
where the ``intermediate'' profile $f$ is given by
\begin{equation}\label{profil}
f(z)=(p-1+b|z|^2)^{-\frac{1}{p-1}}, \;\mbox{for all} \;z\in \R^N, \; \mbox{with}\;\, b=\frac{(p-1)^2}{4p}.\end{equation}
Our argument in \cite{AZ} is a non trivial adaptation of the pioneering work performed for equation \eqref{equ} by
Bricmont and Kupiainen \cite{bricmont} and Merle and Zaag \cite{MZ97} (see also the note \cite{MZ96}). More precisely, the proof is given in the similarity variables setting: we linearize the PDE around the profile candidate $f$ defined in \eqref{profil} (actually, around a small perturbation of $f$), then, we control the nonpositive part of the spectrum thanks to the decaying properties of the linear operator. As for the positive eigenvalues, we control them thanks to a topological argument based on index theory. Note that already when $\mu=0$, the profile in \eqref{b_u} is sharper than the profile derived in \cite{MZ97}, in the sense that we divide here the bound by $1+(\frac{|x|^2}{T-t})^{\frac{\beta}{2}}$.

\bigskip
Despite this sharp estimate, we left two questions unanswered in \cite{AZ}:
\begin{itemize}
\item Is the origin the only blow-up point of the constructed solution?
\item Can we derive an equivalent of the "final profile" (namely, $u^*(x) \equiv \displaystyle\lim_{t\to T}u(x,t)$) near the origin?
\end{itemize}
\bigskip

In this paper, we positively answer these two open questions. In fact, we adapt the technique developed for equation \eqref{equ} by  Giga and Kohn \cite{giga3}, in order  to obtain the single point blow-up result for equation $(\ref{eq_u})$ and give the description of the blow-up
final profile $u^\ast$ such that $u(x,t)\to u^\ast $ in $C^1$  of every compact set   of $\R^N\setminus \{0\}$. More precisely,  we prove the following Theorem:
\begin{theo}\label{main_thm}
Let $\mu\in \R$ and $p$, $q$ be two real numbers such that \eqref{hyp} holds.\\
Assume that   equation $(\ref{eq_u})$ has a solution $u$ which  blows up  at the origin in some finite time $T$ and satisfies  \eqref{b_u} and \eqref{b_nabla_u} for some $\beta$ satisfying \eqref{beta}.
Then,
\begin{enumerate}
\item It holds that $(u,\nabla u)$ blows up only at the origin.
\item For all $x\neq 0$, $u(x,t)\to u^\ast (x)$ as $t\to T$ in $C^1$  of every compact  of $\R^N\setminus \{0\}$, with
\begin{equation}\label{prof-u}
  u^\ast (x)\sim \left[\frac{8p|\log|x||}{(p-1)^2|x|^2}\right]^{\frac{1}{p-1}} 
  \;\mbox{as}\; x\to 0,
\end{equation}
and
for $|x|$ small, we have
\begin{equation}\label{prof-nabla-u}
\displaystyle |\nabla u^\ast(x)|\leq C |x|^{-\frac{p+1}{p-1}}|\log|x||^{\frac{p+3}{4(p-1)}}.
\end{equation}
\end{enumerate}
\end{theo}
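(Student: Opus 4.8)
The plan is to work in the similarity variables centered at an arbitrary point $a\in\R^N\setminus\{0\}$,
\[
w_a(y,s)=(T-t)^{\frac1{p-1}}u\big(a+y\sqrt{T-t},t\big),\qquad s=-\log(T-t),
\]
which turns \eqref{eq_u} into
\[
\partial_s w_a=\Delta w_a-\tfrac12\,y\cdot\nabla w_a-\tfrac{1}{p-1}w_a+|w_a|^{p-1}w_a+G_a(y,s),
\]
where, in the range of $y$ relevant near $a$ (i.e. $|y|\lesssim|a|e^{s/2}$), the term produced by the gradient and non-local parts of \eqref{eq_u} obeys
\[
|G_a(y,s)|\le C\,|\mu|\,e^{-\gamma s}s^{N/2}\,|\nabla_y w_a(y,s)|,\qquad \gamma:=\tfrac{N+1}{2}-\tfrac{q-1}{p-1}>0;
\]
moreover \eqref{b_nabla_u} gives $\sup_y|\nabla_y w_a(\cdot,s)|\le C$, so $G_a$ is in fact uniformly $O(e^{-\gamma s}s^{N/2})$. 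The inequality $\gamma>0$ is exactly the upper bound on $q$ in \eqref{hyp}, while the lower bound on $q$ in \eqref{hyp} is what makes $\int_{B(0,|x|)}|u(x',t)|^{q-1}dx'$ finite and, by \eqref{b_u}, equal to $O\big((T-t)^{-c}|\log(T-t)|^{N/2}\big)$ with $c:=\tfrac{q-1}{p-1}-\tfrac N2\in(0,1)$ for $|x|\lesssim|a|$; multiplying by $|\nabla u|\sim(T-t)^{-\frac12-\frac1{p-1}}|\nabla_y w_a|$ and by the change-of-variables factor yields the displayed $G_a$. Hence in self-similar variables the perturbation is genuinely lower order, the evolution of $w_a$ being an exponentially small perturbation of the one studied by Giga and Kohn; the whole scheme of \cite{giga3} should then go through, the one new ingredient being the above book-keeping, which uses \emph{both} bounds on $q$ in \eqref{hyp}.

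For item 1, fix $a\neq0$. By \eqref{b_u}, for $x\in B(a,|a|/2)$ both the argument of $f$ and $|x|^2/(T-t)$ tend to $\infty$ as $t\to T$, so $(T-t)^{\frac1{p-1}}|u(x,t)|\to0$ uniformly as $t\to T$. The no-blow-up criterion of Giga--Kohn type then applies: it extends to \eqref{eq_u} because $G_a$ is uniformly small for large $s$, and its proof is that the steady state $w\equiv0$ of the similarity equation is exponentially stable — the linearization $\Delta-\tfrac12 y\cdot\nabla-\tfrac1{p-1}$ is self-adjoint in $L^2_\rho$, $\rho(y)=e^{-|y|^2/4}$, with spectrum in $(-\infty,-\tfrac1{p-1}]$, and the contribution of the far field $|y|\sim|a|e^{s/2}$ (where $w_a$ fails to be small) to any $L^2_\rho$ estimate is doubly-exponentially small. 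It follows that $u$ is bounded in a neighborhood of $(a,T)$, i.e. $(u,\nabla u)$ does not blow up at $a$. Parabolic regularity for \eqref{eq_u} — near $(a,T)$ the only singular coefficient is the $O((T-t)^{-c})$, $c<1$, hence time-integrable, factor multiplying $|\nabla u|$ — upgrades boundedness to convergence of $u(\cdot,t)$ in $C^1$ of a neighborhood of $a$ as $t\to T$, which proves item 1 and the existence of $u^\ast$ in $C^1$ of every compact subset of $\R^N\setminus\{0\}$.

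To compute $u^\ast$ near $0$, I would localize at the parabolic scale $\sqrt{T-t_0(a)}$, where for small $a\neq0$ the matching time $t_0(a)\in[0,T)$ is defined by $|a|^2=K_0^2(T-t_0(a))|\log(T-t_0(a))|$ with $K_0$ a large constant to be fixed; write $s_0=-\log(T-t_0(a))$ and
\[
v(\xi,\sigma)=(T-t_0(a))^{\frac1{p-1}}u\big(a+\xi\sqrt{T-t_0(a)},\,t_0(a)+\sigma(T-t_0(a))\big),\qquad\sigma\in[0,1).
\]
Then $v$ solves a rescaled copy of \eqref{eq_u} on the fixed interval $[0,1)$ whose non-local/gradient coefficient is $O\big((T-t_0(a))^{\gamma}\big)$ times a function of $\sigma$ that is integrable on $[0,1)$ (this uses $c<1$), and \eqref{b_u} gives $v(\cdot,0)=f(z_0)+o(1)$ in $C^1$ as $a\to0$, with $|z_0|=K_0$ and $f(z_0)=(p-1+bK_0^2)^{-\frac1{p-1}}$ small. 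Consequently $v$ stays, uniformly in $\sigma\in[0,1)$, $C^1$-close to the small constant $f(z_0)$, and $v(\xi,\sigma)\to V(\xi)$ as $\sigma\to1$ with $V(0)=f(z_0)\big(1+O(f(z_0)^{p-1})+o_{a\to0}(1)\big)$. Since $v(0,\sigma)\to(T-t_0(a))^{\frac1{p-1}}u^\ast(a)$, this gives
\[
u^\ast(a)=(T-t_0(a))^{-\frac1{p-1}}(p-1+bK_0^2)^{-\frac1{p-1}}\big(1+O(f(z_0)^{p-1})+o_{a\to0}(1)\big).
\]
Writing $bK_0^2(T-t_0(a))=b|a|^2/|\log(T-t_0(a))|$ from the matching relation, and $|\log(T-t_0(a))|=2|\log|a||(1+o(1))$ as $a\to0$ (obtained by taking logarithms in $|a|^2=K_0^2(T-t_0(a))|\log(T-t_0(a))|$), one obtains
\[
u^\ast(a)=\left(\frac{b|a|^2}{2|\log|a||}\right)^{-\frac1{p-1}}\big(1+O(f(z_0)^{p-1})+O(K_0^{-2})+o_{a\to0}(1)\big).
\]
Letting $K_0\to\infty$ after $a\to0$ (the standard diagonal argument, since $f(z_0)\to0$ and $K_0^{-2}\to0$) removes the $K_0$-dependent errors; inserting $b=\tfrac{(p-1)^2}{4p}$ turns this into \eqref{prof-u}.

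Finally, $\nabla u^\ast(a)=(T-t_0(a))^{-\frac12-\frac1{p-1}}\nabla_\xi V(0)$, and $|\nabla_\xi V(0)|$ is controlled by \eqref{b_nabla_u} at the matching time together with a parabolic (interpolation) smoothing estimate for $v$ on $[0,1)$; substituting the matching relations and $|\log(T-t_0(a))|\sim2|\log|a||$ then yields \eqref{prof-nabla-u}, the logarithmic exponent $\tfrac{p+3}{4(p-1)}$ reflecting the balance between the scaling weight $(T-t_0(a))^{-\frac12-\frac1{p-1}}$ and the smoothing gain. The hard part is item 1: even though the gradient/non-local perturbation is lower order in self-similar variables, the coefficient $\int_{B(0,|x|)}|u|^{q-1}$ itself blows up pointwise as $t\to T$, so the argument must use \emph{simultaneously} its time-integrability ($c<1$, from the left bound on $q$) and the exponential gain of the self-similar rescaling ($\gamma>0$, from the right bound on $q$), and must combine these with the Gaussian weight that neutralizes the far field $|y|\sim|a|e^{s/2}$ on which $w_a$ is not small.
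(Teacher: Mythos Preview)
Your overall architecture is right, and in particular your control of the non-local term (your $G_a$ bound, which is the paper's Proposition~\ref{new_prop}) and your reading of the two inequalities on $q$ in \eqref{hyp} are exactly what drive the argument. For Part~2 your approach --- match at $t_0(a)$, track the rescaled $v$ on $[0,1)$, compare to the ODE with data $f(K_0)$, then unwind the matching relation --- is the paper's; the only difference is that the paper compares $v$ to the full ODE orbit $v_{K_0}(\tau)=((p-1)(1-\tau)+bK_0^2)^{-1/(p-1)}$ rather than to the constant $f(K_0)$, but as you note the discrepancy is $O(K_0^{-2})$.

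The gap is in Part~1. The Giga--Kohn no-blow-up criterion is \emph{not} proved via $L^2_\rho$ stability of $w\equiv0$; it is an $L^\infty$ iteration with cut-offs. The paper (following \cite{giga3}, Theorem~2.1) works in your variables $v(\xi,\sigma)$ on the finite interval $[0,1)$, multiplies by a spatial cut-off $\phi_r$, and writes Duhamel for $\phi_r v$ and for $\phi_r\nabla v$: starting from $|v|\le\epsilon_0(1-\sigma)^{-1/(p-1)}$ and $|\nabla v|\le\epsilon_0(1-\sigma)^{-1/2-1/(p-1)}$ on $B_1$, one improves the exponent on $\nabla v$ by $\gamma/2$ at each step (Lemma~\ref{lemm_w}), reaches $|\nabla v|\le C\epsilon_0(1-\sigma)^{-C\epsilon_0^{p-1}}$ on a smaller ball, feeds this back into the Duhamel formula for $v$, and a final Gronwall gives uniform bounds. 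Your $L^2_\rho$ sketch would have to control the nonlinear term of size $e^{ps/(p-1)}$ sitting near $|y|\sim|a|e^{s/2}$ and then convert $L^2_\rho$ smallness into $L^\infty$ bounds near $y=0$; neither step is routine, and ``doubly-exponentially small'' is not a proof.

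A related omission: your line ``parabolic regularity upgrades boundedness to $C^1$ convergence'' does not close. The equation for $\nabla v$ contains $\nabla\big(|\nabla v|\int_{B_0}|v|^{q-1}\big)$, and controlling this is not standard regularity but precisely the iterative Duhamel argument above, applied to the gradient equation \eqref{eq_w}. Without that step one obtains neither the non-blow-up of $\nabla u$ outside the origin nor the bound \eqref{prof-nabla-u}. Incidentally, the logarithmic exponent $\tfrac{p+3}{4(p-1)}$ in \eqref{prof-nabla-u} does not come from an interpolation inequality: it comes from taking the cut-off at scale $|\log(T-t_0)|^{1/4}$ in the Duhamel estimate for $\phi\,\nabla v$, so that $\nabla\phi$ contributes a factor $|\log(T-t_0)|^{-1/4}$.
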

\begin{rem}
\begin{enumerate}
\item As for the intermediate profile (i.e. valid for $0\le t<T$) given in \eqref{b_u}-\eqref{b_nabla_u}, the final profile (i.e. for $t=T$) given in \eqref{prof-u}-\eqref{prof-nabla-u} shows no change in the behavior, with respect to the standard semilinear heat equation \eqref{equ}. However, one should keep in mind that the proof is much more complicated, as one already sees from \cite{AZ} and the present paper.
\item From \eqref{b_u}, we know that $u$ blows up at the
  origin. Unfortunately, we cannot say if $\nabla u$ blows up at the
  origin too. In addition, 
  we could only obtain an upper bound in \eqref{prof-nabla-u}. We conjecture that \eqref{prof-u} holds also after differentiation in space. Unfortunately, our estimates are still not sharp enough to prove that.
\item As we have already pointed-out in \cite{AZ}, the single-point blow-up for $u$ is trivial, if $\mu\le 0$.
Indeed, it happens that we are able to ensure that the constructed solution is nonnegative, hence, our solution is a nonnegative subsolution of
the standard semilinear heat equation \eqref{equ}, and the argument of Giga and Kohn in Section 2 page 850 of \cite{giga3}
allows us to conclude that $u$ blows up only at the origin.
However, this leaves open the 
non blow-up of $\nabla u$ outside the origin
when $\mu\le 0$, and for both $u$ and $\nabla u$, when $\mu>0$, not mentioning the
derivation of the final blow-up profile for general $\mu\in \R$.
\end{enumerate}
\end{rem}
The paper is organized as follows. In Section $2$, we control the non-local term in equation \eqref{eq_u}. In Section $3$, we prove  the single-point blow-up property. Finally, in Section $4$, we describe the final profile.
\section{Control of the non-local term}
This section is devoted to the control of the non-local term
 in equation \eqref{eq_u}.
We consider $u(t)$ a solution of $(\ref{eq_u})$  which blows up  in finite time $T$ at the point $a=0$ and satisfies $(\ref{b_u})$ and $(\ref{b_nabla_u})$.
We consider  $\delta>0$ which will be fixed small enough such that various estimates hold. For $K_0>0$ to be fixed large enough later and any $x_0\neq 0$, we define $\t \in [0,T)$ by
\begin{eqnarray}\label{x_0}
|x_0|=K_0\sqrt{(T-\t) |\log(T-\t)| }, \quad &\mbox{if}&\;  0<|x_0|\leq \delta, \\
\t=t_0(\delta), \hspace*{5.5cm} &\mbox{if}& \; |x_0|> \delta.\nonumber
\end{eqnarray}
Note that $\t\to T$ as $x_0\to 0$ and $\t$ is uniquely defined if $\delta$ is sufficiently small.\\
Let us introduce for each 
$x_0\neq 0$
a rescaled version of $u$
\begin{equation}\label{v}
v(x_0, \xi, \tau)=(T-\t)^{\frac{1}{p-1}}u(x,t),
\end{equation}
where
\begin{equation}\label{scaling}
x\!=\!x_0\!+\!\xi\sqrt{T\!-\!\t}, \, t\!=\!\t+\tau(T\!-\!\t),\,\xi \in \R^N, \, \tau \in [- \frac{\t}{T\!-\!\t}, 1).
\end{equation}
We also introduce
\begin{equation}\label{w}
w=\nabla v.
\end{equation}
Since $u$ is a solution of equation $(\ref{eq_u})$, it follows that $v$ and $w$ satisfy
\begin{equation}
  v_{\tau}  =\Delta v+|v|^{p-1}v+\mu (T-\t)^\gamma|w|\displaystyle \int_{B_0}|v|^{q-1},\label{eq_v}\\
 \end{equation}
 \begin{equation}
 w_{\tau}  =\Delta w+p|v|^{p-1}w+\mu (T-\t)^\gamma \nabla(|w|\displaystyle \int_{B_0}|v|^{q-1}),\label{eq_w}
\end{equation}
where
\begin{equation}\label{gamma}
\gamma=\displaystyle \frac{p-q}{p-1}+\frac{N-1}{2}\end{equation}
and   $B_0$ is the ball of center $-\displaystyle 	K_0\sqrt{|\log(T-\t)|}\frac{x_0}{|x_0|}$ and radius\\ $\displaystyle |\xi+K_0\sqrt{|\log(T-\t)|}\frac{x_0}{|x_0|}|.$
Note from \eqref{hyp} that
\begin{equation}\label{hyp_ga}
\gamma >0.
\end{equation}
Let us give an idea of the method used to prove 
Theorem \ref{main_thm}.
We proceed in two steps:
\begin{itemize}
\item First, arguing as Giga and Kohn did in \cite{giga3} [Theorem  2.1 Page 850], we consider $r>0$  and  $\phi_r$  a smooth function supported in $B_r=\{x\in \R^N;  \;  |x|<r\}$ such that
$\phi_r=1$ on $B_{r/2}$ and $0\leq \phi_r\leq 1$. We then consider cut-off versions of the functions, namely $v\phi_r$ and $w\phi_r$. \\
We use an iteration process to show that $v$ and $w$ are bounded,
hence that $\xi=0$ is not a blow-up point of $v$ and $w$. From the
transformation \eqref{v}-\eqref{scaling}, this means that neither $u$
nor $\nabla u$ blow up at $x=x_0$.
Since $u$ blows up at the origin by \eqref{b_u}, this proves the single point blow-up property
for $(u, \nabla u)$.
\item Second, we prove the existence of a blow-up final profile $u^\ast$ such that\\ $u(x,t)\to u^\ast $ as $t\to T$ in $C^1$
of every compact of $\R^N\backslash \{0\}$. Next, we find an equivalent of $u^\ast$ and an upper bound on $\nabla u^*$ near the blow-up point.
\end{itemize}
Note that our equation  $(\ref{eq_u})$ is in the  class of perturbed semilinear  heat equations. However, compared to the previous works, our perturbation is not trivial since we have both a non-local and a  gradient term. Therefore, the previous methods  couldn't be used successfully, mainly because the non-local term in \eqref{eq_v} and \eqref{eq_w} could not be localized, and carries ``long-distance" information over a ball with a far away center  $-\displaystyle 	K_0\sqrt{|\log(T-\t)|}\frac{x_0}{|x_0|}$ which diverges to infinity as $|x_0|\to 0$. So, some crucial modifications are needed. In particular, we need to control this new  non-local term. We state  our key result.
\begin{pro}\label{new_prop} Let $\mu\neq 0$ and $p$, $q$ be two real numbers such that \eqref{hyp} holds.\\
Assume  equation $(\ref{eq_u})$ has a solution $u$ which  blows up  at the origin in some finite time $T$ and satisfies  \eqref{b_u} and \eqref{b_nabla_u} for some $\beta$ satisfying \eqref{beta}.
Then, for all $\eta>0$, there exists a positive constant $C_\eta$ such that for all $(x,t)\in \R^N\times [0, T)$
\begin{equation*}
\displaystyle \int_{B(0, |x|)}|u(x', t)|^{q-1}dx'\leq C_\eta (T-t)^{\gamma -\frac{1}{2}-\eta}.
\end{equation*}
\end{pro}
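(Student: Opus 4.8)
\textit{Proof strategy.} The right‑hand side of the claimed inequality does not depend on $x$, and the integrand is non‑negative, so it suffices to bound $\int_{\R^N}|u(x',t)|^{q-1}\,dx'$ uniformly in $x$. The plan is simply to integrate the pointwise estimate \eqref{b_u}. Before doing so, I would dispose of the times far from $T$: on any interval $[0,t_1]$ with $t_1<T$ the solution stays bounded in $W^{1,\infty}_\beta(\R^N)$ (the space in which \eqref{eq_u} is wellposed, see \cite{AZ}), hence $|u(x',t)|\le C(1+|x'|^\beta)^{-1}$ there, so that $\int_{\R^N}|u(x',t)|^{q-1}dx'\le C\int_{\R^N}(1+|x'|^\beta)^{-(q-1)}dx'<\infty$ — the last integral converging because $\beta(q-1)>N$, which holds by \eqref{beta} since $\mu\neq0$. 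As the right‑hand side $(T-t)^{\gamma-\frac12-\eta}$ is bounded below by a positive constant on $[0,t_1]$, the conclusion is trivial there; so we may assume throughout that $T-t\le e^{-1}$, i.e. $s:=|\log(T-t)|\ge 1$.

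Now, using \eqref{profil}, estimate \eqref{b_u} gives, for every $x'\in\R^N$,
\begin{equation*}
|u(x',t)|\le (T-t)^{-\frac{1}{p-1}}\left[f\!\left(\frac{x'}{\sqrt{(T-t)s}}\right)+\frac{C}{\left(1+\left(\frac{|x'|^2}{T-t}\right)^{\frac{\beta}{2}}\right)s^{\frac{1-\beta}{2}}}\right].
\end{equation*}
Raising to the power $q-1$ and using $(a+b)^{q-1}\le C_q(a^{q-1}+b^{q-1})$, I would split $\int_{\R^N}|u|^{q-1}$ into a profile part and an error part. For the profile part, the substitution $y=x'/\sqrt{(T-t)s}$ yields
\begin{equation*}
(T-t)^{-\frac{q-1}{p-1}}\int_{\R^N}f\!\left(\frac{x'}{\sqrt{(T-t)s}}\right)^{q-1}dx'=\left((T-t)s\right)^{\frac N2}(T-t)^{-\frac{q-1}{p-1}}\int_{\R^N}f(y)^{q-1}\,dy,
\end{equation*}
and since $f(y)^{q-1}\sim b^{-\frac{q-1}{p-1}}|y|^{-\frac{2(q-1)}{p-1}}$ at infinity, the integral on the right is finite exactly because $\frac{2(q-1)}{p-1}>N$, i.e. $q>\frac N2(p-1)+1$, the left inequality in \eqref{hyp}. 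Using the algebraic identity $\frac N2-\frac{q-1}{p-1}=\frac{p-q}{p-1}+\frac{N-2}{2}=\gamma-\frac12$ (recall \eqref{gamma}), the profile part is $\le C\,(T-t)^{\gamma-\frac12}\,|\log(T-t)|^{\frac N2}$.

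For the error part, the substitution $z=x'/\sqrt{T-t}$ (so that $(\frac{|x'|^2}{T-t})^{\beta/2}=|z|^\beta$) reduces the contribution to
\begin{equation*}
C\,(T-t)^{\frac N2-\frac{q-1}{p-1}}\,s^{-\frac{(1-\beta)(q-1)}{2}}\int_{\R^N}\frac{dz}{(1+|z|^\beta)^{q-1}},
\end{equation*}
where the integral converges because $\beta(q-1)>N$, i.e. $\beta>\frac{N}{q-1}$ — precisely the lower bound on $\beta$ in \eqref{beta} in the case $\mu\neq0$ at hand. Since $\beta<1$ and $q>1$ we have $s^{-\frac{(1-\beta)(q-1)}{2}}\le1$, so the error part is $\le C(T-t)^{\gamma-\frac12}$. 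Adding the two parts gives $\int_{\R^N}|u(x',t)|^{q-1}dx'\le C(T-t)^{\gamma-\frac12}|\log(T-t)|^{\frac N2}$, and the elementary inequality $|\log(T-t)|^{\frac N2}\le C_\eta(T-t)^{-\eta}$ (valid for $T-t\in(0,T]$) yields the claim.

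The computation is short; the point that requires care is conceptual rather than technical. One must notice that the two integrability thresholds actually used — convergence of $\int_{\R^N}f^{q-1}$ and of $\int_{\R^N}(1+|z|^\beta)^{-(q-1)}$ — coincide respectively with the left endpoint of \eqref{hyp} and with the $\mu\neq0$ half of \eqref{beta}, so the hypotheses are used sharply and the argument breaks exactly at the boundary; and one must spot the identity $\gamma-\frac12=\frac N2-\frac{q-1}{p-1}$, which is what makes the profile and error contributions carry the same power of $T-t$ and produces precisely the exponent in the statement. The remaining issue, that \eqref{b_u} is only meaningful near $t=T$, is handled by the normalization of the first paragraph.
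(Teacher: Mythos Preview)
Your proof is correct and follows essentially the same route as the paper: both arguments plug the pointwise estimate \eqref{b_u} into the integral, split into a profile term and an error term, perform the same two changes of variable, and invoke precisely the integrability conditions $\frac{2(q-1)}{p-1}>N$ (from \eqref{hyp}) and $\beta(q-1)>N$ (from \eqref{beta}) before absorbing the logarithmic factor into $(T-t)^{-\eta}$. Your version is slightly more explicit in isolating the algebraic identity $\gamma-\tfrac12=\tfrac N2-\tfrac{q-1}{p-1}$ and in disposing of times bounded away from $T$, but these are cosmetic additions to what is otherwise the paper's own argument.
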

\begin{proof}
From inequality $(\ref{b_u})$, we have
\begin{eqnarray}
\displaystyle \Big|u(x',t)\Big|^{q-1}&\leq& C
(T-t)^{-\frac{q-1}{p-1}}\Big|f(\frac{x'}{\sqrt{(T-t)|\log(T-t)|}})\Big|^{q-1}\nonumber\\
&&+ C \frac{(T-t)^{-\frac{q-1}{p-1}}}{|\log(T-t)|^\frac{(q-1)(1-\beta)}{2}}\frac{1}{(1+(\frac{|x'|^2}{T-t})^{\frac{\beta}{2}})^{q-1}}.\label{one}
\end{eqnarray}
By  definition  \eqref{profil} of the profile $f$, we see that
\begin{eqnarray*}
\int_{B(0, |x|)}|f(\frac{x'}{\sqrt{(T-t)|\log(T-t)|}})|^{q-1}\leq C \int_{B(0, |x|)}\frac{1}{(1+|\frac{x'}{\sqrt{(T-t)|\log(T-t)|}}|^2)^{\frac{q-1}{p-1}}}dx',\\
\leq  C\big((T-t)|\log(T-t)|\big)^{\frac{N}{2}} \int_{B(0, \frac{|x|}{\sqrt{(T-t)|\log(T-t)|}})}\frac{dy}{(1+|y|^2)^{\frac{q-1}{p-1}}}.
\end{eqnarray*}
Since $2\frac{q-1}{p-1}>N$ from \eqref{hyp}, we have  \begin{eqnarray}\label{I1}
\int_{B(0, |x|)}|f(\frac{x'}{\sqrt{(T-t)|\log(T-t)|}})|^{q-1}
&\leq & C(T-t)^{\frac{N}{2}}|\log(T-t)|^{\frac{N}{2}},
\end{eqnarray}
on the one hand. On the other hand, from the choice of $\beta $ in \eqref{beta}, we have  $(q-1)\beta>N$, and  this yields
 \begin{eqnarray}\label{I2}
 \int_{B(0, |x|)}\frac{1}{(1+(\frac{|x'|^2}{T-t})^{\frac{\beta}{2}})^{q-1}} dx'
&\leq&  C(T-t)^{\frac{N}{2}}\int_{B(0, \frac{|x|}{\sqrt{T-t}})}\frac{dy}{(1+|y|^\beta)^{q-1}}\nonumber\\
&\le&  C(T-t)^{\frac{N}{2}}.
\end{eqnarray}
From \eqref{one}, \eqref{I1} and \eqref{I2}, we get
 \begin{eqnarray*}
\int_{B(0, |x|)}|u(x',t)|^{q-1}dx'
&\leq& C (T-t)^{\frac{N}{2}-\frac{q-1}{p-1}}|\log(T-t)|^{\frac{N}{2}}.
\end{eqnarray*}
Furthermore, for all $\eta>0$, there exists $C_\eta>0$ such that
 \begin{eqnarray*}
\int_{B(0, |x|)}|u(x',t)|^{q-1}dx'
&\leq& C_\eta (T-t)^{\frac{N}{2}-\frac{q-1}{p-1}-\eta}.
\end{eqnarray*}
Using the definition \eqref{gamma} of $\gamma$, we  conclude the proof of Proposition \ref{new_prop}.
\end{proof}
\begin{rem}
We remark that, for $v$ defined in \eqref{v}, the proposition reads  as follows:
\begin{equation}\label{2.6}
\displaystyle (T-\t)^\gamma \int_{B_0}|v(x_0,\xi', \tau)|^{q-1}d\xi'\leq C_\eta (T-\t)^{\gamma -\eta}(1-\tau)^{\gamma -\frac{1}{2}-\eta}.
\end{equation}
\end{rem}
\section{No blow-up under some threshold}
In this section, we prove the single-point blow-up property for $(u,
\nabla u)$, i.e. part 1 of Theorem \ref{main_thm}. Since $u$ blows up
at the origin from \eqref{b_u},  we see from the definitions
\eqref{v}-\eqref{w} that it is enough to prove that $v(x_0 ,\xi, \tau)$
and $w(x_0, \xi, \tau)$ are bounded for $\xi=0$ and $\tau \in [0,1)$, whenever $x_0\neq 0$. More precisely, this is our main tool.
\begin{pro}[No blow-up under some threshold]
 \label{prop_single_blow_up}
 There exist
$\bar \eps$
and $M>1$ with the following property:\\
Assume that for some $\epsilon_0\le \bar \eps$ and $x_0\in
\R^N\backslash \{0\}$, we have
 for all $|\xi|<1$ and $\tau\in [0, 1)$: 
\begin{equation}\label{est_v}
|v(x_0,\xi, \tau )|+\sqrt{1-\tau}|\nabla v(x_0,\xi, \tau )|\leq \eps_0(1-\tau)^{-\frac{1}{p-1}},
\end{equation}
where  $v(x_0,\xi,\tau)$ is defined in \eqref{v}.
Then, for all
$|\xi|\le \frac 1M$
and $\tau\in [0, 1),$ we have
$$|v(x_0,\xi, \tau )|+|\nabla v(x_0,\xi, \tau )|\leq M \eps_0.$$
\end{pro}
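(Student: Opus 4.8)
The plan is to adapt the iterative parabolic-regularity argument of Giga and Kohn \cite{giga3}, modified so as to absorb the potential $p|v|^{p-1}$ coming from the gradient equation as well as the non-local forcing, the latter being controlled by Proposition \ref{new_prop} (equivalently by \eqref{2.6}). Fix a radius $r_0\in(0,1)$ and, at each stage, a cut-off $\phi=\phi_r\in C^\infty_c(B_r)$ with $\phi\equiv 1$ on $B_{r/2}$, $0\le\phi\le 1$; set $V=v\phi$ and $W=(\nabla v)\phi$. Using \eqref{eq_v}, \eqref{eq_w} together with $\phi\,\Delta v=\Delta V-2\nabla v\!\cdot\!\nabla\phi-v\,\Delta\phi$ and $\phi\,\Delta w=\Delta W-2\,\nabla\!\cdot(w\nabla\phi)+w\,\Delta\phi$, one sees that $V$ and $W$ solve inhomogeneous heat equations whose right-hand sides consist of three kinds of terms: \emph{(i)} the critical potential $|v|^{p-1}V$, resp. $p|v|^{p-1}W$, which by \eqref{est_v} has coefficient $\le\eps_0^{p-1}(1-\tau)^{-1}$ --- subcritical only because $\eps_0$ is small; \emph{(ii)} commutator terms supported in the annulus $r/2<|\xi|<r$, estimated directly from \eqref{est_v}; \emph{(iii)} the non-local forcing, whose scalar factor $(T-\t)^{\gamma}\int_{B_0}|v|^{q-1}$ is bounded uniformly in $\xi$ by $C_\eta(T-\t)^{\gamma-\eta}(1-\tau)^{\gamma-\frac12-\eta}$ thanks to \eqref{2.6}, the gain $\gamma>0$ (see \eqref{hyp_ga}) being essential. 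Note also that $p>3$ yields $\frac1{p-1}<\frac12$.

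I would then run a bootstrap on the time-growth exponents. Represent $V$ and $W$ by Duhamel and estimate in $L^\infty$ --- or, following \cite{giga3} more closely, through a finite ladder of $L^q$-norms --- using that $e^{t\Delta}$ is an $L^\infty$-contraction and $\|\nabla e^{t\Delta}g\|_{L^\infty}\le Ct^{-1/2}\|g\|_{L^\infty}$; the latter forces us to keep the non-local term of \eqref{eq_w} \emph{under} the outer gradient (after writing $\phi\,\nabla(\cdot)=\nabla(\phi\,\cdot)-\nabla\phi\,\cdot$), since we have no control on $\nabla^2v$. Suppose inductively that $\|v(\tau)\phi_r\|_{L^\infty}\le K(1-\tau)^{-\alpha}$ and $\|\nabla v(\tau)\phi_r\|_{L^\infty}\le K'(1-\tau)^{-\beta}$ on $[\tau_0,1)$ --- true initially with $\alpha=\frac1{p-1}$, $\beta=\frac1{p-1}+\frac12$ by \eqref{est_v}. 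Then the type-\emph{(ii)} Duhamel terms contribute $O(1)$ in the scalar equation (since $\frac1{p-1}+\frac12<1$) and $O\big(1+(1-\tau)^{-(\beta-\frac12)_+}\big)$ in the gradient equation, through mixed integrals $\int_{\tau_0}^\tau(\tau-s)^{-1/2}(1-s)^{-b}\,ds\lesssim 1+(1-\tau)^{1/2-b}$; the type-\emph{(iii)} term contributes at most $O\big(1+(1-\tau)^{-(\beta-\gamma+\eta)_+}\big)$; and the critical term \emph{(i)} is absorbed by a singular Gronwall inequality with constant $\eps_0^{p-1}$, costing only a slowly-growing factor $(1-\tau)^{-C\eps_0^{p-1}}$. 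Putting these together on $B_{r/2}$, the exponents $\alpha,\beta$ both strictly decrease, by a fixed positive amount (essentially $\gamma-\eta$).

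Iterating finitely many times --- the number of steps, and the shrinking radii $r_0>r_0/2>\cdots$, depending only on $p,q,N$ --- brings $\alpha$ and $\beta$ down to $O(\eps_0^{p-1})$. At that point $|v|^{p-1}\le C(1-\tau)^{-(p-1)\cdot O(\eps_0^{p-1})}$ has an \emph{integrable} singularity (for $\eps_0$ small), every remaining Duhamel integral converges, and every Gronwall factor is $\le 2$; hence one last application of the $L^\infty$ estimate for $V$ gives $v$ genuinely bounded, whereupon $|v|^{p-1}$ is a bounded coefficient in the equation for $W$ and one last application of the $L^\infty$ estimate for $W$ gives $\nabla v$ bounded. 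Tracking the $\eps_0$-dependence (each forcing term carries an $\eps_0$ from \eqref{est_v}, and $\|V(\tau_0)\|_{L^\infty},\|W(\tau_0)\|_{L^\infty}\le C\eps_0$ since $1-\tau_0$ is bounded below) gives $\|v(\tau)\phi_r\|_{L^\infty}+\|\nabla v(\tau)\phi_r\|_{L^\infty}\le C\eps_0$ for $\tau\in[\tau_0,1)$, with $C$ depending only on $p,q,N,\mu$; thus $|v|+|\nabla v|\le C\eps_0$ on $B_{r/2}$ for those $\tau$, while on $[0,\tau_0]$ the bound is immediate from \eqref{est_v}. Taking $M=\max(C,2)$, $\bar\eps$ small, and $1/M$ no larger than the final radius completes the proof.

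The main obstacle is the genuine two-way coupling of the $v$- and the $\nabla v$-bootstraps: the non-local forcing in \eqref{eq_v}--\eqref{eq_w} contains $|\nabla v|$, and the gradient equation's commutators bring $\nabla v$ into play on the annulus, so neither exponent can be lowered in isolation --- the two iterations must be carried out simultaneously. Within this, the delicate point is keeping the mixed time-singularities $\int(\tau-s)^{-1/2}(1-s)^{-b}\,ds$ (arising whenever a derivative lands on the heat kernel) under control: these are harmless exactly when $b<\frac12$, which fails initially ($b=\frac1{p-1}+\frac12$) and is restored only after the first gradient step, thanks to $\frac1{p-1}<\frac12$. A further subtlety is that \eqref{2.6} is fixed once and for all (it comes from the construction estimate \eqref{b_u}, not from \eqref{est_v}), so the non-local term's time decay cannot be improved along the bootstrap, and one must check at every step that its exponent stays below the current threshold --- which is again where $\gamma>0$ enters.
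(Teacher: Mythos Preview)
Your proposal is correct and follows essentially the same route as the paper: cut off, write Duhamel for $v\phi_r$ and $w\phi_r$, control the non-local forcing via \eqref{2.6}, absorb the critical potential $\eps_0^{p-1}(1-\tau)^{-1}$ by the singular Gronwall Lemma, and iterate on shrinking balls until the exponents reach $O(\eps_0^{p-1})$, after which one more pass gives genuine boundedness.

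The only noteworthy difference is organizational. You frame the coupling as genuinely \emph{two-way} and therefore run the $v$- and $w$-iterations simultaneously. In fact the coupling is one-way: in the $w$-equation the only dependence on $v$ is through the potential $p|v|^{p-1}$, and the \emph{original} hypothesis $|v|\le \eps_0(1-\tau)^{-1/(p-1)}$ already makes this $\le p\eps_0^{p-1}(1-\tau)^{-1}$, which Gronwall absorbs---no improvement of $\alpha$ is needed. The paper exploits this and iterates on $w$ \emph{alone} (its Lemma~\ref{lemm_w}, with step size $\gamma/2$ rather than your $\gamma-\eta$), keeping $\alpha=\frac1{p-1}$ fixed throughout, until $\beta$ reaches $C\eps_0^{p-1}$; a \emph{single} application of the $v$-estimate then brings $\alpha$ down in one shot, since the non-local forcing in \eqref{eq_v} now carries the small $w$. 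This sequential order is cleaner bookkeeping (fewer constants to track, fewer radius reductions), but your simultaneous scheme reaches the same conclusion.
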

\noindent Let us first use this proposition to prove part 1 of Theorem \ref{main_thm}, then, we will prove it.
\begin{proof}[Proof of part 1 of Theorem \ref{main_thm}, assuming that
  Proposition \ref{prop_single_blow_up} holds]~\\
Note first from \eqref{b_u} that $u$ blows up at the
origin, hence $(u,\nabla u)$ does the same. It remains then to prove
that neither $u$ nor $\nabla u$ blow up outside the origin.\\
Consider $\delta>0$ to be fixed later small enough. Consider then $x_0 \neq 0$ and $v(x_0,\xi,\tau)$ defined in \eqref{x_0}-\eqref{scaling}.
 By definition, it is enough to show that this function satisfies the hypotheses of Proposition \ref{prop_single_blow_up} in order to conclude.\\
From \eqref{x_0},  2 cases arise:\\
\label{ProofTh1}- \textbf{Case 1: $|x_0|\le \delta$}. Take $|\xi|<1$ and $\tau \in [0,1)$. By definition \eqref{x_0}-\eqref{scaling}, we write from\eqref{b_u}
\begin{eqnarray*}
&&|v(x_0, \xi, \tau)|=(T-\t)^{\frac 1{p-1}}|u(x,t)|\\
&\le& (T-t_0(x_0))^{\frac 1{p-1}} (T-t)^{-\frac 1{p-1}}
\left\{f\left(\frac x{\sqrt{(T-t)|\log(T-t)|}}\right)+\frac C{|\log(T-t)|^{\frac{1-\beta}2}} \right\}\\
&\le& C(1-\tau)^{-\frac 1{p-1}}
\left\{ f\left(\frac x{\sqrt{(T-\t)|\log(T-\t)|}}\right)
+\frac C{|\log(T-\t)|^{\frac{1-\beta}2}} \right\}\\
& = &C(1-\tau)^{-\frac 1{p-1}}
\left\{ f\left(K_0-\frac 1{\sqrt{|\log(T-\t)|}}\right)
+\frac C{|\log(T-\t)|^{\frac{1-\beta}2}} \right\}\\
& \le & C(1-\tau)^{-\frac 1{p-1}} \left\{ f\left(\frac {K_0}2\right)
+\frac C{|\log(T-\t)|^{\frac{1-\beta}2}} \right\}\\
&\le & \epsilon_0 (1-\tau)^{-\frac 1{p-1}},
\end{eqnarray*}
provided that $\delta$ is small enough, and $K_0$ is large enough.\\
By a similar calculation, we can prove the estimate on $\nabla v$ starting from \eqref{b_nabla_u}.\\
- \textbf{Case 2: $|x_0|>\delta$}. Here, we have $\t=t_0(\delta)$. Concerning $v$, the calculation of the former case works, except for the estimate of $\frac{x_0}{\sqrt{(T-\t)|\log(T-\t)|}}$, which needs this small adjustment:
$
\Big[
\frac{|x_0|}{\sqrt{(T-\t)|\log(T-\t)|}} \ge \frac{\delta}{\sqrt{(T-t_0(\delta))|\log(T-t_0(\delta))|}}=K_0
\Big]
$.
Once again, the estimate for $\nabla v$  follows similarly, starting from \eqref{2.6}.\\
This finishes the proof of Part 1 of Theorem \ref{main_thm}, assuming that Proposition \ref{prop_single_blow_up} holds.
\end{proof}

It remains then to prove Proposition \ref{prop_single_blow_up}. Throughout this section,
 we write $v(\xi,\tau)$ instead of $ v(x_0,\xi,\tau)$, in
order to simplify notations.

\begin{proof}[Proof of Proposition \ref{prop_single_blow_up}]~\\
To prove Proposition \ref{prop_single_blow_up}, we use a cut-off
technique: Let $r\in (0, 1]$,  and consider $\phi_r$  a smooth function supported in $B(0, r)$ such that $\phi_r=1$ on $B(0, \frac{r}{2})$ and $0\leq \phi_r\leq 1.$ We introduce the cut-off  of  the solution $v$ and its  gradient $w=\nabla v$,  by  $v_r= \phi_r v$  and $w_r=\phi_r w$.\\
 Let us sketch the main steps  of the proof of   Proposition \ref{prop_single_blow_up}.
\begin{itemize}
\item In Step $1$, we use the Duhamel formulation of the equation 
satisfied by the  cut-off of the gradient of $v$. Then, starting from
hypothesis \eqref{est_v} of Proposition \ref{prop_single_blow_up} and using
Proposition \ref{new_prop} (actually its consequence \eqref{2.6}), we
prove by an iteration process the existence of some
 $r_1$ such that for all $\tau \in [0,1)$,
\begin{equation}\label{vr1}
\|v(\tau)\|_{L^\infty(B_{r_1})}\leq \frac{\eps_0}{(1-\tau)^{\frac{1}{p-1}}},
\end{equation}
and
\begin{equation}\label{wr1}
\|w(\tau)\|_{L^\infty(B_{r_1})}\leq\displaystyle  \frac{C\eps_0}{(1-\tau)^{C\eps_0^{p-1}}}.
\end{equation}

\item  In Step $2$, applying the Duhamel formulation of the equation 
satisfied by the cut-off of $v$ and estimate $(\ref{wr1})$, we
improve  estimate $(\ref{vr1})$ and prove that
for all $\tau \in [0,1)$,
\begin{equation}\label{vr}
\|v(\tau)\|_{L^\infty(B_{r_2})}\leq \frac{C\eps_0}{(1-\tau)^{C\eps_0^{p-1}}},
\end{equation}
for some $r_2\in(0, r_1)$.
\item In the final step, from  the inequalities  $(\ref{wr1})$ and
  $(\ref{vr})$, we prove
  that for all $\tau \in [0,1)$,
\begin{equation}\label{vr3}
\|v(\tau)\|_{L^\infty(B_{r_3})}+  \|w(\tau)\|_{L^\infty(B_{r_3})}\leq M\eps_0,
\end{equation}
for some $r_3\in(0, r_2]$ and a constant $M>0$.
\end{itemize}

\bigskip

{\bf{Step 1:}} Our starting point is the hypothesis \eqref{est_v}
given in Proposition \ref{prop_single_blow_up}. Using the Duhamel formulation satisfied by the cut-off of the gradient of the solution  and applying Proposition \ref{new_prop} for some $\eta>0$, we establish the following Lemma:
\begin{lem} \label{lemm_w}
  For any $r>0$, there exists $C_r>0$ such that for any $\alpha>0$,
  there exists $\bar \eps>0$ such that for any $\eps_0\in(0,
  \bar \eps)$, 
  if we have for all $\tau \in [0,1)$,
\begin{eqnarray}\label{est_v_w}
\|v\|_{L^\infty(B_r)}\leq \frac{\eps_0}{(1-\tau)^{\frac{1}{p-1}}}, \; \mbox{and}\quad  \|w\|_{L^\infty(B_r)}\leq \frac{C\eps_0}{(1-\tau)^{\alpha}},
\end{eqnarray}
then, for all $\tau \in [0,1)$,
\begin{eqnarray*}
 \displaystyle \|w\|_{L^\infty(B_{\frac{r}{2}})}\leq\left \{
\begin{array}{lcl}\displaystyle
C_{r} \frac{\eps_0}{(1-\tau)^{\alpha-\frac{\gamma}{2}}}, \; \mbox{if }\; \alpha\geq \gamma,\\
\displaystyle
C_{r} \frac{\eps_0}{(1-\tau)^{C_r \eps_0^{p-1}}}, \; \mbox{if }\; \alpha< \gamma,
\end{array}
\right.
\end{eqnarray*}
where $\gamma$ is introduced in \eqref{gamma}.
\end{lem}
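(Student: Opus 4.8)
\textbf{Proof proposal for Lemma \ref{lemm_w}.}

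The plan is to use the Duhamel (variation of constants) formula for the cut-off gradient $w_r=\phi_r w$, viewing the linear part as the heat semigroup acting on $w_r$, with the remaining terms (the potential $p|v|^{p-1}w$, the commutator terms coming from $\Delta(\phi_r w)-\phi_r\Delta w$, and the non-local term) treated as a source. Writing $e^{s\Delta}$ for the heat semigroup on $\R^N$, from \eqref{eq_w} we obtain, for $0\le \tau_1\le\tau<1$,
$$
w_r(\tau)=e^{(\tau-\tau_1)\Delta}w_r(\tau_1)+\int_{\tau_1}^{\tau}e^{(\tau-\sigma)\Delta}\Big[p|v|^{p-1}w_r+F_r(\sigma)+\mu(T-\t)^\gamma\phi_r\,\nabla\big(|w|\textstyle\int_{B_0}|v|^{q-1}\big)\Big]\,d\sigma,
$$
where $F_r=-2\,\div(w\,\nabla\phi_r)+w\,\Delta\phi_r$ collects the commutator terms, which are supported in $B_r\setminus B_{r/2}$ and hence harmless on $B_{r/2}$ provided we iterate on a decreasing sequence of radii. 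The natural choice here is $\tau_1=0$ is not enough to gain; instead I would iterate: take a finite increasing sequence of times approaching $1$ (or, equivalently, restart the Duhamel formula from a variable base time $\tau_1$ close to $\tau$), because the gain in the exponent of $(1-\tau)$ at each application of $e^{(\tau-\sigma)\Delta}$ to a bounded-in-$L^\infty$ source is limited; the $L^\infty\to L^\infty$ bound $\|e^{s\Delta}g\|_\infty\le\|g\|_\infty$ combined with $\|e^{s\Delta}\div g\|_\infty\le Cs^{-1/2}\|g\|_\infty$ is what produces the factor $\gamma/2$ in the statement (the $1/2$ being the derivative loss, the $\gamma$ coming from $(T-\t)^\gamma$ and estimate \eqref{2.6}).

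The key steps, in order, are: (i) fix the radii $r>r'>r/2$ and the cut-off $\phi_r$, and derive the Duhamel identity above on $B_{r'}$, checking that the commutator source $F_r$ is supported away from $B_{r/2}$ and is controlled by $\|w\|_{L^\infty(B_r)}$, hence by $C\eps_0(1-\tau)^{-\alpha}$; (ii) estimate the potential term $\int_{\tau_1}^\tau\|e^{(\tau-\sigma)\Delta}(p|v|^{p-1}w_r)\|_\infty\,d\sigma$ using hypothesis \eqref{est_v_w}: here $\||v|^{p-1}\|_\infty\le\eps_0^{p-1}(1-\sigma)^{-1}$, so this term is $\le C\eps_0^{p-1}\int_{\tau_1}^\tau(1-\sigma)^{-1}\|w(\sigma)\|_{L^\infty(B_r)}\,d\sigma$, which is exactly the structure that, via a Gronwall-type argument, turns a clean $(1-\tau)^{-\alpha}$ bound into a $(1-\tau)^{-\alpha-C\eps_0^{p-1}}$ bound — and when $\alpha<\gamma$ the improved part of the exponent, $\alpha-\gamma/2<\alpha$, gets absorbed and one is left only with the $C_r\eps_0^{p-1}$ loss, giving the second alternative; (iii) estimate the non-local term: by \eqref{2.6}, $(T-\t)^\gamma\int_{B_0}|v|^{q-1}\le C_\eta(T-\t)^{\gamma-\eta}(1-\tau)^{\gamma-1/2-\eta}$, which is a bounded (indeed small, for $\eta$ small) factor since $\gamma-1/2>0$ is not guaranteed — wait, we only know $\gamma>0$, so $\gamma-1/2-\eta$ may be negative; one must then combine this integrable-in-$\tau$ singularity with $|w|\le C\eps_0(1-\sigma)^{-\alpha}$ and the smoothing $s^{-1/2}$ from the gradient, so the net contribution is $\le C_r\eps_0\int_{\tau_1}^\tau(\tau-\sigma)^{-1/2}(1-\sigma)^{\gamma-1/2-\eta-\alpha}\,d\sigma$, which after a standard Beta-function estimate yields the exponent $\alpha-\gamma+\eta+\tfrac12+\tfrac12=\alpha-\gamma+\eta+1$... this needs care, and here is where choosing $\eta$ small and $\bar\eps$ small, and keeping track of whether $\alpha\ge\gamma$ or $\alpha<\gamma$, produces the two cases. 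Finally (iv) collect (i)–(iii), absorb the homogeneous term $e^{(\tau-\tau_1)\Delta}w_r(\tau_1)$ (bounded by $\|w(\tau_1)\|_\infty$), optimize over the base time $\tau_1$ (typically $\tau_1=\tau/2$ or $1-2(1-\tau)$), and conclude on $B_{r/2}$.

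The main obstacle I anticipate is the non-local term in step (iii): unlike the classical Giga–Kohn setting, $\int_{B_0}|v|^{q-1}$ is taken over a ball $B_0$ whose center $-K_0\sqrt{|\log(T-\t)|}\,x_0/|x_0|$ runs off to infinity, so it genuinely cannot be localized by the cut-off $\phi_r$, and one is forced to use the global-in-space a priori bound \eqref{2.6} rather than any pointwise decay of $v$ near $\xi=0$. Reconciling the $(T-\t)^\gamma$ prefactor, the $(1-\tau)^{\gamma-1/2-\eta}$ temporal weight, and the $\tfrac12$-derivative loss from differentiating the product — while still closing the iteration with only an $\eps_0$-small loss in the exponent — is the delicate bookkeeping at the heart of the lemma. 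A secondary subtlety is ensuring the constants $C_r$ blow up in a controlled (non-cumulative) way as the radius shrinks across the finitely many iteration steps, which is handled by fixing the number of radii in advance (it depends only on $r$, $\gamma$, $p$, hence the notation $C_r$) rather than letting it grow with $\eps_0$.
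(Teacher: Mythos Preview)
Your skeleton is the same as the paper's --- Duhamel for the cut-off $w_r=\phi_r w$, treat $p|v|^{p-1}w_r$ by Gronwall using $\|v\|^{p-1}\le \eps_0^{p-1}(1-s)^{-1}$, and control the non-local term via \eqref{2.6} --- but two of your proposed ingredients are unnecessary and one computation goes astray.

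\textbf{No time or radius iteration is needed inside the lemma.} The paper takes $\tau_1=0$ once and for all and never restarts Duhamel from a moving base time; nor does it pass through a decreasing chain of radii inside the lemma (that iteration happens \emph{outside}, when the lemma is applied finitely many times to descend from the initial hypothesis $\alpha=\tfrac12+\tfrac1{p-1}$ to $\alpha<\gamma$). The reason $\tau_1=0$ suffices is that the potential term is handled by the Gronwall lemma (Lemma~\ref{lgron}), not by bootstrapping: once the source terms are shown to be $\le C_\eta\eps_0(1-\tau)^{\gamma-\eta-\alpha}$, Gronwall against $C\eps_0^{p-1}(1-s)^{-1}$ only costs a factor $(1-\tau)^{-C\eps_0^{p-1}}$, which is absorbed for $\eps_0$ small. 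Your remark that the commutator terms are ``harmless on $B_{r/2}$'' because they are supported in $B_r\setminus B_{r/2}$ is also off: the heat semigroup is not local, so that support information is useless; one simply bounds $\|F_r\|_{L^\infty}\le C_r\|w\|_{L^\infty(B_r)}$ as you indeed write afterwards.

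\textbf{The non-local bookkeeping.} Your line ``yields the exponent $\alpha-\gamma+\eta+\tfrac12+\tfrac12=\alpha-\gamma+\eta+1$'' is where you lose the thread: you are forgetting the $+1$ coming from integrating in $s$. The correct application of the elementary integral lemma (Lemma~\ref{lem_integ}, item~(i)) to
\[
\int_0^\tau(\tau-s)^{-1/2}(1-s)^{-(1/2-\gamma+\eta+\alpha)}\,ds
\]
gives, when $\alpha+\eta>\gamma$, a bound $C(1-\tau)^{\gamma-\eta-\alpha}$; choosing $\eta<\gamma/2$ then yields exactly the claimed improvement $(1-\tau)^{-(\alpha-\gamma/2)}$. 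When $\alpha<\gamma$, one takes $\eta<(\gamma-\alpha)/2$ so that item~(iii) applies and the source is bounded by a constant, leaving only the Gronwall loss $(1-\tau)^{-C_r\eps_0^{p-1}}$. That dichotomy, together with the observation $0<\gamma<\tfrac12$ (from \eqref{hyp}), is the whole content of the three cases in the paper's proof; no optimization over $\tau_1$ is involved.
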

Before proving this lemma, we need the following integral result from Giga and Kohn \cite{giga3} (see Lemma $2.2$ page 851 in \cite{giga3}):
\begin{lem}\label{lem_integ}
For $0<\alpha <1$, $\theta>0$ and $0\leq \tau<1$, the integral $$I(\tau)=\displaystyle \int_0^\tau (\tau -s)^{-\alpha}(1-s)^{-\theta}ds
$$
satisfies
\begin{itemize}
\item[i)] $I(\tau)\leq ((1-\alpha)^{-1}+(\alpha+\theta-1)^{-1})(1-\tau)^{1-\alpha-\theta},$ if $\alpha +\theta>1$,
\item[ii)] $I(\tau)\leq (1-\alpha)^{-1}+|\log(1-\tau)|,$ if $\alpha +\theta=1$,
\item[iii)]$I(\tau)\leq (1-\alpha-\theta)^{-1},$ if $\alpha +\theta<1$.
\end{itemize}
\end{lem}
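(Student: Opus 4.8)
\textbf{Plan for the proof of Lemma \ref{lem_integ}.}

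This is an elementary calculus estimate on the convolution-type integral $I(\tau)=\int_0^\tau(\tau-s)^{-\alpha}(1-s)^{-\theta}\,ds$, and the plan is to split the interval $[0,\tau]$ at the midpoint $s=\tau/2$ so that on each half one of the two factors is under control, and then estimate the two pieces separately. Write $I(\tau)=I_1(\tau)+I_2(\tau)$ with $I_1=\int_0^{\tau/2}$ and $I_2=\int_{\tau/2}^\tau$.

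For $I_1$, on $[0,\tau/2]$ we have $\tau-s\ge\tau/2\ge s$, hence $(\tau-s)^{-\alpha}\le(\tau-s)^{-\alpha}$ is not directly small, but more usefully $\tau - s \ge \tau/2$ is of little help when $\tau$ is close to $1$; instead the better route is: on $[0,\tau/2]$ we have $1-s\ge 1-\tau/2\ge(1-\tau)$ \emph{and} $1-s\ge 1/2$, and $\tau-s\ge\tau-s$; so I would bound $(\tau-s)^{-\alpha}$ by integrating it explicitly, $\int_0^{\tau/2}(\tau-s)^{-\alpha}\,ds=(1-\alpha)^{-1}\big(\tau^{1-\alpha}-(\tau/2)^{1-\alpha}\big)\le(1-\alpha)^{-1}$ (using $0<\alpha<1$ and $\tau\le1$), while $(1-s)^{-\theta}\le(1-\tau/2)^{-\theta}$, or, to get the correct power of $(1-\tau)$, simply note $(1-s)^{-\theta}\le 2^{\theta}(1-\tau)^{-\theta}$ is wrong in the wrong direction — rather, since $1-s\ge (1-\tau)$ on this range when $\tau\le 1$ is false too; the clean statement is $1-s\ge 1/2$ on $[0,\tau/2]$, giving $(1-s)^{-\theta}\le 2^\theta$, so $I_1\le 2^\theta(1-\alpha)^{-1}$, which in every case is dominated by the claimed right-hand sides up to adjusting constants (and is $\le$ the stated bound since $(1-\tau)^{1-\alpha-\theta}\ge 1$ when $\alpha+\theta>1$, $|\log(1-\tau)|\ge 0$ when $\alpha+\theta=1$, and $(1-\alpha-\theta)^{-1}$ absorbs it when $\alpha+\theta<1$). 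I will present the $I_1$ bound cleanly as a constant depending only on $\alpha,\theta$.

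For $I_2$, on $[\tau/2,\tau]$ we have $1-s\le 1-\tau/2$ but more importantly $1-s\ge 1-\tau$, so substituting $s=1-(1-\tau)\sigma$... actually the cleanest is the substitution $r=\tau-s$, giving $I_2=\int_0^{\tau/2}r^{-\alpha}(1-\tau+r)^{-\theta}\,dr$. Then a further scaling $r=(1-\tau)\rho$ yields $I_2=(1-\tau)^{1-\alpha-\theta}\int_0^{\tau/(2(1-\tau))}\rho^{-\alpha}(1+\rho)^{-\theta}\,d\rho$. The remaining integral $J(A)=\int_0^A\rho^{-\alpha}(1+\rho)^{-\theta}\,d\rho$ is finite at $0$ since $\alpha<1$; for its behavior as $A\to\infty$ (which corresponds to $\tau\to1$): if $\alpha+\theta>1$ the integrand decays faster than $\rho^{-1}$ at infinity so $J(A)\le J(\infty)=:C(\alpha,\theta)<\infty$, and splitting $\int_0^1+\int_1^\infty$ with $\int_0^1\rho^{-\alpha}d\rho=(1-\alpha)^{-1}$ and $\int_1^\infty\rho^{-\alpha-\theta}d\rho=(\alpha+\theta-1)^{-1}$ gives exactly the constant $(1-\alpha)^{-1}+(\alpha+\theta-1)^{-1}$ in case (i); if $\alpha+\theta=1$ then $\int_1^A\rho^{-1}(1+\rho^{-1})^{-\theta}... $ behaves like $\log A\le(1-\alpha)^{-1}+|\log(1-\tau)|$ after undoing the scaling, giving case (ii); if $\alpha+\theta<1$ one does not scale out $(1-\tau)$ but instead bounds directly $I_2=\int_0^{\tau/2}r^{-\alpha}(1-\tau+r)^{-\theta}dr\le\int_0^{\tau/2}r^{-\alpha-\theta}\,dr\le\int_0^1 r^{-\alpha-\theta}dr=(1-\alpha-\theta)^{-1}$ (using $1-\tau+r\ge r$), giving case (iii). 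Combining $I_1$ and $I_2$ and noting that in each regime the $I_1$ constant is absorbed into the stated bound (enlarging constants harmlessly, and in case (iii) noting the stated bound is already a pure constant) finishes the proof.

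There is no real obstacle here; the only thing to be careful about is bookkeeping of constants and making the three case splits $\alpha+\theta>1$, $=1$, $<1$ cleanly, and choosing in each case whether to scale out the factor $(1-\tau)^{1-\alpha-\theta}$ (cases (i),(ii)) or to use the crude bound $1-\tau+r\ge r$ (case (iii)). Since the paper cites this as a known result from Giga and Kohn \cite{giga3}, I would in fact simply quote it with a one-line indication that it follows from splitting the integral at $\tau/2$ and rescaling, rather than write the full computation.
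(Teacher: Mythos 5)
The paper offers no proof of this statement at all: it is quoted verbatim as Lemma 2.2, page 851, of Giga and Kohn \cite{giga3}, so the honest "same approach as the paper" would be exactly what you suggest in your last sentence, namely to cite it. Your sketch of an actual proof is correct in substance but slightly lossier than the standard argument. The split at $s=\tau/2$ gives $I_1\le 2^{\theta}(1-\alpha)^{-1}$ (you pay a factor $2^{\theta}$ from $(1-s)^{-\theta}\le 2^{\theta}$ on $[0,\tau/2]$), so after adding $I_2$ you only recover the three stated inequalities with enlarged constants, not with the exact constants $(1-\alpha)^{-1}+(\alpha+\theta-1)^{-1}$, etc., that the lemma asserts. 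The decomposition that yields the constants exactly is the one adapted to the identity $1-s=(1-\tau)+(\tau-s)$: after your own substitution $r=\tau-s$, which gives
\[
I(\tau)=\int_0^{\tau}r^{-\alpha}\bigl((1-\tau)+r\bigr)^{-\theta}\,dr,
\]
split at $r=1-\tau$ (equivalently $s=2\tau-1$) rather than at $r=\tau/2$: on $\{r\le 1-\tau\}$ bound $(1-\tau)+r\ge 1-\tau$ and integrate $r^{-\alpha}$ to get $(1-\alpha)^{-1}(1-\tau)^{1-\alpha-\theta}$; on $\{r\ge 1-\tau\}$ bound $(1-\tau)+r\ge r$ and integrate $r^{-\alpha-\theta}$ to get $(\alpha+\theta-1)^{-1}(1-\tau)^{1-\alpha-\theta}$ in case (i) or $\log\tau-\log(1-\tau)\le|\log(1-\tau)|$ in case (ii); in case (iii) no split is needed at all, since $(1-\tau)+r\ge r$ gives $I(\tau)\le\int_0^{\tau}r^{-\alpha-\theta}dr\le(1-\alpha-\theta)^{-1}$ directly (your own observation for $I_2$, which in fact works for the whole integral). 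Since the lemma is only ever invoked in this paper with unspecified constants $C$, your version with constants enlarged by $2^{\theta}$ would serve every application equally well; but if the lemma is stated with the Giga--Kohn constants, one should either prove those constants or, as the authors do, simply cite the source.
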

With this lemma, we are ready to give the proof of Lemma \ref{lemm_w}.
\begin{proof}[Proof of Lemma \ref{lemm_w}]
Consider $w_r = \phi_r w$, where the cut-off $\phi_r$ is introduced
right before \eqref{vr1}.
Since $w$ satisfies $(\ref{eq_w})$, it follows that
$w_r$
satisfies the following equation: for all $\xi \in \R$ and $\tau \in [0,1)$,
\begin{eqnarray*}
 \partial_{\tau}w_r&=&\Delta w_{r} +w\Delta \phi_r-2\nabla (w\nabla \phi_r)+p|v|^{p-1}w_r\\
 &&+\mu (T-\t)^\gamma \nabla(\phi_r|w|\displaystyle \int_{B_0}|v|^{q-1})-\mu (T-\t)^\gamma \nabla\phi_r|w|\displaystyle \int_{B_0}|v|^{q-1}.
\end{eqnarray*}
The Duhamel equation satisfied by
$w_r$
is the following: for all $\tau \in [0,1)$,
\begin{eqnarray}\label{duh_w}
w_r(\tau)&=&S(\tau)w_r(0)+\int_0^\tau S(\tau-s) w\Delta \phi_r-2\int_0^\tau S(\tau-s)\nabla (w\nabla \phi_r)\nonumber\\
&&+p\int_0^\tau S(\tau -s)|v|^{p-1}w_r\nonumber\\
 &&+\mu (T-\t)^\gamma\int_0^\tau S(\tau -s) \nabla(\phi_r|w|\displaystyle \int_{B_0}|v|^{q-1})\nonumber\\
 &&-\mu (T-\t)^\gamma \int_0^\tau S(\tau -s) \nabla\phi_r|w|\displaystyle \int_{B_0}|v|^{q-1},
\end{eqnarray}
where  $S(t)$ is  the heat semigroup, and  $B_0$ already introduced
right after \eqref{gamma} is the ball of center $-\displaystyle 	K_0\sqrt{|\log(T-\t)|}\frac{x_0}{|x_0|}$ and radius $\displaystyle |\xi+K_0\sqrt{|\log(T-\t)|}\frac{x_0}{|x_0|}|.$
 We recall the following well-known smoothing effect of the heat semigroup:
\begin{equation}\label{Semigroup}
\|S(t)f\|_{L^\infty}\leq \|f\|_{L^\infty}, \quad \|\nabla S(t)f\|_{L^\infty}\leq \frac{C}{\sqrt{t}}\|f\|_{L^\infty},\quad \forall t>0, \; \forall f\in L^{ \infty}(\R^N).
\end{equation}
Taking the $L^\infty-$norm on the Duhamel equation \eqref{duh_w},
using the smoothness of
$\phi_r$
and inequalities $(\ref{Semigroup})$,   we get for all $\tau \in [0, 1)$:
\begin{eqnarray}\label{duha_w}
\|w_r(\tau)\|_{L^\infty}&\leq &\|w(0)\|_{L^\infty(B_r)}+C\int_0^\tau \|w\|_{L^\infty(B_r)}+C\int_0^\tau (\tau-s)^{-\frac{1}{2}}\|w\|_{L^\infty(B_r)}\nonumber\\
&&+p\int_0^\tau \|v\|_{L^\infty(B_r)}^{p-1}\|w_r\|_{L^\infty}\nonumber\\
 &&+C\mu (T-\t)^\gamma\int_0^\tau (\tau -s)^{-\frac{1}{2}} \|w\|_{L^\infty(B_r)}\displaystyle \|\int_{B_0}|v|^{q-1}\|_{L^\infty(B_r)}\nonumber\\
 &&+C\mu (T-\t)^\gamma \int_0^\tau \|w\|_{L^\infty(B_r)}\displaystyle \|\int_{B_0}|v|^{q-1}\|_{L^\infty(B_r)}.
\end{eqnarray}
Note that the various $C$ constants may depend on $r$.
Since $\gamma>0$ from \eqref{hyp_ga},
 we may fix some
\begin{equation}\label{defeta}
 \eta \in (0,\frac \gamma 2)
\mbox{ with in addition } \eta<\frac{\gamma-\alpha}2 \mbox{ if }\alpha<\gamma.
\end{equation} 
From assumption $(\ref{est_v_w})$   and inequality \eqref{2.6}, we
obtain for all $\tau \in [0,1)$,
\begin{eqnarray}
\|w_r(\tau)\|_{L^\infty}&\leq &C\eps_0+C\eps_0\int_0^\tau (\tau-s)^{-\frac{1}{2}} (1-s)^{-\alpha}ds\nonumber\\
&&+C\eps_0^{p-1}\int_0^\tau (1-s)^{-1}\|w_r(s)\|_{L^\infty}ds\label{ineq}\\
 &&+ C_\eta \eps_0(T-\t)^{\gamma-\eta}\int_0^\tau (\tau -s)^{-\frac{1}{2}} (1-s)^{-\frac{1}{2}+\gamma -\eta-\alpha}ds.\nonumber
\end{eqnarray}
In order to  apply Lemma \ref{lem_integ}, we distinguish $3$ cases:\\
{\bf{Case 1:}} When $\alpha >\frac{1}{2}$. Clearly, from \eqref{gamma}
and \eqref{hyp}, we have
\begin{equation}\label{g12}
0<  \gamma<\frac{1}{2}
\end{equation}
and
\begin{equation}\label{alphaeta}
  \alpha+\eta -\gamma>0.
\end{equation}
 Then, applying  item $i)$ of Lemma \ref{lem_integ}, we obtain
\begin{eqnarray*}
\|w_r(\tau)\|_{L^\infty}&\leq &C\eps_0(1-\tau)^{-\alpha+\frac{1}{2}} +C_\eta \eps_0 (T-\t)^{\gamma-\eta}(1-\tau)^{\gamma -\eta-\alpha}\\
&&+C\eps_0^{p-1}\int_0^\tau (1-s)^{-1}\|w_r(s)\|_{L^\infty}ds.
\end{eqnarray*}
Since  $\gamma-\alpha  -\eta <-\alpha +\frac{1}{2}$ from
\eqref{defeta} and
\eqref{g12},
using \eqref{defeta} again, we see that
\begin{eqnarray}\label{E}
\|w_r(\tau)\|_{L^\infty}&\leq &C_\eta\eps_0(1-\tau)^{\gamma -\eta-\alpha}+C\eps_0^{p-1}\int_0^\tau (1-s)^{-1}\|w_r(s)\|_{L^\infty}ds\\
&\leq&C_\eta\eps_0+C\eps_0^{p-1}\int_0^\tau
       (1-s)^{-1}\|w_r(s)\|_{L^\infty}ds\nonumber\\
  &&+ C_\eta \eps_0(\alpha +\eta-\gamma )\int_0^\tau (1-s)^{\gamma -\alpha -\eta -1}ds.\nonumber
\end{eqnarray}
Let us now recall the following Gronwall Lemma from Giga and Kohn \cite{giga3} (see Lemma 2.3 page 852 there):
\begin{lem}[A Gronwall lemma; Giga and Kohn \cite{giga3}] \label{lgron}
If $y(t), \,r(t)$ and $ q(t) $  are continuous functions defined on $[t_0, t_1]$ such that
\begin{eqnarray*}
y(t)\leq y_0+\int_{t_0}^t y(s) r(s) ds+ \int_{t_0}^t q(s) ds, \quad t_0\le t\le t_1,
\end{eqnarray*}
then
\begin{eqnarray*}
y(t)\leq \exp\left\{\int_{t_0}^t r(\tau) d\tau \right\}\Big[ y_0+ \int_{t_0}^t q(\tau) \exp\left\{-\int_{t_0}^\tau r(\sigma) d\sigma\right\}  ds\Big].
\end{eqnarray*}
\end{lem}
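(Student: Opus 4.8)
The plan is to prove this Gronwall-type inequality by the classical integrating-factor argument for linear differential inequalities. In every application we make of it (see, e.g., \eqref{ineq}, where the relevant coefficient is $r(s)=C\eps_0^{p-1}(1-s)^{-1}\ge0$), and in the statement of Giga and Kohn from which it is quoted, the coefficient $r$ is nonnegative; this is the hypothesis under which I argue, and it is precisely what converts the integral inequality for $y$ into a differential inequality that can be integrated explicitly.

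First I would set
\[
Y(t)=y_0+\int_{t_0}^t y(s)r(s)\,ds+\int_{t_0}^t q(s)\,ds,\qquad t_0\le t\le t_1,
\]
so that $y(t)\le Y(t)$ on $[t_0,t_1]$ by hypothesis, while $Y(t_0)=y_0$. Since $y,r,q$ are continuous, $Y$ is of class $C^1$ with $Y'(t)=y(t)r(t)+q(t)$, whence $Y'(t)\le r(t)Y(t)+q(t)$, using $y\le Y$ together with $r\ge0$. I would then multiply by the positive integrating factor $E(t)=\exp\{-\int_{t_0}^t r(\sigma)\,d\sigma\}$, which satisfies $E'=-rE$ and $E(t_0)=1$, to get $\frac{d}{dt}\bigl(EY\bigr)=E\bigl(Y'-rY\bigr)\le Eq$. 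Integrating this from $t_0$ to $t$ and using $E(t_0)Y(t_0)=y_0$ yields $E(t)Y(t)\le y_0+\int_{t_0}^t E(\tau)q(\tau)\,d\tau$; dividing by $E(t)$, i.e. multiplying by $1/E(t)=\exp\{\int_{t_0}^t r(\sigma)\,d\sigma\}$, and recalling $y\le Y$ gives exactly the asserted bound.

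I do not expect any genuine obstacle here: the argument is entirely standard and short. The one point to keep in mind is the sign condition $r\ge0$, which is what makes the step $y(t)r(t)\le Y(t)r(t)$ legitimate; it is built into the hypotheses in Giga and Kohn and is satisfied in all the uses made of the lemma in this paper. An equivalent route, again under $r\ge0$, is to iterate the integral inequality directly on $[t_0,t_1]$, the exponential factor then emerging as the sum of the resulting Neumann series; but this is not needed for the present purposes.
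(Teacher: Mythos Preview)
Your argument is correct and is the standard integrating-factor proof. The paper itself gives no proof at all: it simply quotes the lemma from Giga and Kohn \cite{giga3} (Lemma~2.3, p.~852) and uses it as a black box, so there is nothing to compare at the level of method. Your remark that the step $y(t)r(t)\le Y(t)r(t)$ needs $r\ge 0$ is well taken; that hypothesis is indeed present in the original statement in \cite{giga3} (and is satisfied in every application here), even though it was dropped in the paper's restatement.
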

~\\
 Applying    Lemma \ref{lgron} to estimate \eqref{E}, we obtain for all $\tau \in [0,1)$,
\begin{eqnarray*}
\|w_r(\tau)\|_{L^\infty}\leq C_\eta\eps_0(1-\tau)^{-C \eps_0^{p-1}}\Big[1+ (\alpha +\eta-\gamma) \int_0^\tau (1-s)^{\gamma -\alpha -\eta +C\eps_0^{p-1}-1}ds\Big].
\end{eqnarray*}
From \eqref{alphaeta}, we see that we can choose $\eps_0$ small enough such that $\gamma -\alpha -\eta +C\eps_0^{p-1}<0 $, and  obtain for all $\tau \in [0,1)$,
\begin{eqnarray*}
\|w_r(\tau)\|_{L^\infty}\leq C_\eta \eps_0(1-\tau)^{\gamma -\alpha -\eta} .
\end{eqnarray*} 
Since  $\eta<\frac{\gamma}{2}$ from \eqref{defeta}, we see that for all $\tau \in[0,1)$,\begin{eqnarray*}
\|w_r(\tau)\|_{L^\infty}\leq C_\eta\eps_0(1-\tau)^{-\alpha+\frac{\gamma}{2}}.
\end{eqnarray*}
{\bf{Case 2:}} When $\gamma\le \alpha \leq \frac{1}{2}$. We see from
\eqref{defeta} that estimate \eqref{alphaeta}
still holds.
Therefore, using \eqref{ineq} and
applying items $i)$ and $iii)$  of   Lemma \ref{lem_integ}, we obtain for all $\tau \in [0,1)$,
\begin{eqnarray*}
  \|w_r(\tau)\|_{L^\infty}&\leq &
C\eps_0[1+|\log(1-\tau)|]+ C_\eta \eps_0(T-\t)^{\gamma-\eta}(1-\tau)^{\gamma -\eta-\alpha}\\
                          &&+C\eps_0^{p-1}\int_0^\tau (1-s)^{-1}\|w_r(s)\|_{L^\infty}ds\\
  &\le& C_\eta \eps_0(1-\tau)^{\gamma -\eta-\alpha}+C\eps_0^{p-1}\int_0^\tau (1-s)^{-1}\|w_r(s)\|_{L^\infty}ds,
\end{eqnarray*}
which is exactly the same inequality \eqref{E} encountered in Case
1. Thus, by the same argument, 
we obtain  for   $\eps_0$ small enough  and  for all $\tau \in [0,1)$,
\begin{eqnarray*}
\|w_r(\tau)\|_{L^\infty}\leq C_\eta\eps_0(1-\tau)^{-\alpha+\frac{\gamma}{2}}.
\end{eqnarray*}
{\bf{Case 3:}} When $\alpha <\gamma<\frac{1}{2}$.
From \eqref{defeta}, we see that
\begin{equation}\label{etaalpha}
\gamma -\alpha -\eta>0.
\end{equation}
Thanks to \eqref{ineq} together with item $iii)$ of Lemma \ref{lem_integ}, we obtain for all $\tau \in [0,1)$,
\begin{eqnarray*}
\|w_r(\tau)\|_{L^\infty}&\leq & C_\eta\eps_0
+C\eps_0^{p-1}\int_0^\tau (1-s)^{-1}\|w_r(s)\|_{L^\infty}ds.
\end{eqnarray*}
Using again Lemma \ref{lgron}, we see that for all $\tau \in [0,1)$,
\begin{eqnarray*}
\|w_r(\tau)\|_{L^\infty}\leq C_\eta\eps_0(1-\tau)^{-C \eps_0^{p-1}}.
\end{eqnarray*}
Since $\gamma <\frac 12$ from
\eqref{g12},
it is easy to see that
the cases 1, 2 and 3 mentioned above cover all possibilities. Thus, this concludes   the proof of Lemma \ref{lemm_w}.
\end{proof}
Starting from the hypothesis \eqref{est_v} stated in Lemma
\ref{prop_single_blow_up} and using a finite iteration, Lemma \ref{lemm_w} provides us with a 
positive constant  $r_1$ such that estimates $(\ref{vr1})$ and
$(\ref{wr1})$ are satisfied.

\bigskip

{\bf{Step 2}}: Here, we start from estimates \eqref{vr1} and
\eqref{wr1} we have just proved in Step 1. Then,
we give a Duhamel formulation satisfied by the cut-off
of the solution $v$. Using various estimates of Step $1$
together with Proposition \ref{new_prop}, we prove the following result:
\begin{lem}\label{lem_V2}
There exists a positive constant  $r_2<r_1$ such that
\begin{eqnarray}\label{vr2}
 \|v\|_{L^\infty(B_{r_2})}\leq C\eps_0(1-\tau)^{-C \eps_0^{p-1}}.
\end{eqnarray}
\end{lem}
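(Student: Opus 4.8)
The plan is to run the same Duhamel/Gronwall scheme as in the proof of Lemma \ref{lemm_w}, but now on a cut-off of $v$ itself rather than of its gradient, feeding in the bounds \eqref{vr1}--\eqref{wr1} obtained in Step~1. Fix $r\in(0,r_1)$ and keep the cut-off $\phi_r$ introduced just before \eqref{vr1}, so that $v_r:=\phi_r v$ is supported in $B_r\subset B_{r_1}$ and coincides with $v$ on $B_{r/2}$. Since $v$ solves \eqref{eq_v}, the function $v_r$ satisfies
\begin{equation*}
\partial_\tau v_r=\Delta v_r+v\Delta\phi_r-2\nabla\cdot(v\nabla\phi_r)+|v|^{p-1}v_r+\mu(T-\t)^\gamma\,\phi_r|w|\int_{B_0}|v|^{q-1},
\end{equation*}
hence the Duhamel formula
\begin{align*}
v_r(\tau)=S(\tau)v_r(0)&+\int_0^\tau S(\tau-s)\big(v\Delta\phi_r\big)\,ds-2\int_0^\tau S(\tau-s)\nabla\cdot\big(v\nabla\phi_r\big)\,ds\\
&+\int_0^\tau S(\tau-s)\big(|v|^{p-1}v_r\big)\,ds+\mu(T-\t)^\gamma\int_0^\tau S(\tau-s)\Big(\phi_r|w|\int_{B_0}|v|^{q-1}\Big)\,ds .
\end{align*}

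I would then take $L^\infty$ norms, using the smoothing estimates \eqref{Semigroup} (so the term with $\nabla\cdot$ produces a factor $(\tau-s)^{-1/2}$, whereas the non-local term, carrying no spatial derivative, does not) and the smoothness of $\phi_r$, and insert the bounds available on $B_r\subset B_{r_1}$: namely $\|v(s)\|_{L^\infty(B_r)}\le\eps_0(1-s)^{-1/(p-1)}$ from \eqref{vr1}, $\|w(s)\|_{L^\infty(B_r)}\le C\eps_0(1-s)^{-C\eps_0^{p-1}}$ from \eqref{wr1}, and $(T-\t)^\gamma\big\|\int_{B_0}|v(\cdot,s)|^{q-1}\big\|_{L^\infty(B_r)}\le C_\eta(T-\t)^{\gamma-\eta}(1-s)^{\gamma-\frac{1}{2}-\eta}$ from \eqref{2.6}, for some small $\eta>0$. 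Since $p>3$ gives $\frac{1}{p-1}<\frac{1}{2}$, the integrals $\int_0^\tau(1-s)^{-1/(p-1)}\,ds$ and $\int_0^\tau(\tau-s)^{-1/2}(1-s)^{-1/(p-1)}\,ds$ are bounded (the latter by item~$iii)$ of Lemma \ref{lem_integ}), and since $\gamma>0$ by \eqref{hyp_ga}, the exponent $\gamma-\frac{1}{2}-\eta-C\eps_0^{p-1}$ in the non-local integral exceeds $-1$ once $\eta$ and $\eps_0$ are small, so that integral is bounded too. Using also $\|v_r\|_{L^\infty}\le\|v\|_{L^\infty(B_r)}$ in the nonlinear term and $\|v_r(0)\|_{L^\infty}\le\eps_0$, all these contributions are $\le C_{r,\eta}\eps_0$, leaving
\begin{equation*}
\|v_r(\tau)\|_{L^\infty}\le C_{r,\eta}\eps_0+\eps_0^{p-1}\int_0^\tau(1-s)^{-1}\|v_r(s)\|_{L^\infty}\,ds,\qquad\tau\in[0,1).
\end{equation*}

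Finally, applying the Gronwall Lemma \ref{lgron} on $[0,\tau]$ with $y_0=C_{r,\eta}\eps_0$, $r(s)=\eps_0^{p-1}(1-s)^{-1}$ and $q\equiv0$, and using $\int_0^\tau(1-s)^{-1}\,ds=|\log(1-\tau)|$, gives $\|v_r(\tau)\|_{L^\infty}\le C_{r,\eta}\eps_0(1-\tau)^{-\eps_0^{p-1}}$. Since $\phi_r\equiv1$ on $B_{r/2}$, the choice $r_2=r/2<r_1$ yields $\|v(\tau)\|_{L^\infty(B_{r_2})}\le C\eps_0(1-\tau)^{-C\eps_0^{p-1}}$, which is \eqref{vr2}. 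As in Lemma \ref{lemm_w}, the only genuinely new feature compared with the purely local problem is the non-local term, but it has already been controlled through Proposition \ref{new_prop} (that is, \eqref{2.6}); the one point requiring care is keeping the exponent $\gamma-\frac{1}{2}-\eta-C\eps_0^{p-1}$ above $-1$, which is guaranteed by $\gamma>0$ together with taking $\eta$ and $\bar\eps$ small.
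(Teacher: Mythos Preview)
Your proof is correct and follows essentially the same route as the paper's: cut off $v$, write the Duhamel formula for $v_r$, estimate each piece in $L^\infty$ using \eqref{Semigroup}, \eqref{vr1}, \eqref{wr1} and \eqref{2.6}, reduce to the Gronwall inequality $\|v_r(\tau)\|_{L^\infty}\le C\eps_0+C\eps_0^{p-1}\int_0^\tau(1-s)^{-1}\|v_r(s)\|_{L^\infty}\,ds$, and conclude via Lemma~\ref{lgron}. The only cosmetic difference is that the paper takes the cut-off at radius $r_1$ itself (so $r_2=r_1/2$), whereas you work with an arbitrary $r\in(0,r_1)$; the key integrability checks ($p>3$ giving $\frac1{p-1}<\frac12$, and $\frac12-\gamma+\eta+C\eps_0^{p-1}<1$ for the non-local term) are identical.
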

\begin{proof}

  Let $v_{r_1}=\phi_{r_1} v$, where
  the cut-off
$\phi_{r_1}$ is defined right before \eqref{vr1}.
Using equation \eqref{eq_v},
we see that
$v_{r_1}$ satisfies   the following equation for all $\xi \in \R$ and $\tau \in [0,1)$:
\begin{eqnarray*}
 \partial_{\tau}v_{r_1}&=&\Delta v_{r_1} +v\Delta \phi_{r_1}-2\nabla (v\nabla \phi_{r_1})+|v|^{p-1}v_{r_1}
 +\mu (T-\t)^\gamma \phi_{r_1}|\nabla v|\displaystyle \int_{B_0}|v|^{q-1}.
\end{eqnarray*}
Using a Duhamel formulation, we see that 
for all $\tau \in [0,1)$,
\begin{eqnarray*}
v_{r_1}(\tau)&=&S(\tau)v_{r_1}(0)+\int_0^\tau S(\tau-s) v\Delta \phi_{r_1}-2\int_0^\tau S(\tau-s)\nabla (v\nabla \phi_{r_1})\\
&&+\int_0^\tau S(\tau -s)|v|^{p-1}v_{r_1}+\mu (T-\t)^\gamma\int_0^\tau S(\tau -s) \phi_{r_1}|\nabla v|\displaystyle \int_{B_0}|v|^{q-1}.
\end{eqnarray*}
Taking the $L^\infty-$norm,
using inequalities $(\ref{Semigroup})$  and  the smoothness of $\phi_{r_1}$,   we get for all $\tau \in [0, 1)$:
\begin{eqnarray}\label{duha_v}
&&\|v_{r_1}(\tau)\|_{L^\infty}\!\leq\! \|v(0)\|_{L^\infty(B_{r_1})}\!+C\!\int_0^\tau\!\! \|v\|_{L^\infty(B_{r_1})}\!+\!C\int_0^\tau\!\!\! (\tau-s)^{-\frac{1}{2}}\|v\|_{L^\infty(B_{r_1})}\\
&&+\int_0^\tau\!\! \|v\|_{L^\infty(B_{r_1})}^{p-1}\|v_{r_1}\|_{L^\infty}+C\mu (T-\t)^\gamma\int_0^\tau \!\! \|\nabla v\|_{L^\infty(B_{r_1})}\displaystyle \|\int_{B_0}|v|^{q-1}\|_{L^\infty(B_{r_1})}.\nonumber
\end{eqnarray}
From estimates \eqref{2.6},
\eqref{vr1} and \eqref{wr1},
we obtain  for all $\tau \in [0, 1)$,
\begin{eqnarray*}
\|v_{r_1}(\tau)\|_{L^\infty}&\leq &C\eps_0+C\eps_0\int_0^\tau
                                    (\tau-s)^{-\frac{1}{2}}
                                    (1-s)^{-\frac{1}{p-1}}ds\\
                            &&+C\eps_0^{p-1}\int_0^\tau (1-s)^{-1}\|v_{r_1}(s)\|_{L^\infty}ds\\
                            &&+ C \eps_0(T-\t)^{\gamma-\eta}\int_0^\tau  (1-s)^{-\frac{1}{2}+\gamma -\eta-C \eps_0^{p-1}}ds.
\end{eqnarray*}
Using \eqref{defeta} and \eqref{g12}, we write $\frac{1}{2}-\gamma
+\eta +C\eps_0^{p-1} \le
\frac 12-\frac \gamma 2 +C \eps_0^{p-1} \le
\frac 12  +C\eps_0^{p-1}<1$, for $\epsilon_0$ small enough. Recalling
that $p>3$ from \eqref{hyp}, 
we obtain by item $iii)$ of Lemma \ref{lem_integ}  for all $\tau \in [0, 1)$,
\begin{eqnarray*}
\|v_{r_1}(\tau)\|_{L^\infty}&\leq &C\eps_0+C\eps_0^{p-1}\int_0^\tau (1-s)^{-1}\|v_{r_1}(s)\|_{L^\infty}ds.
\end{eqnarray*}
Thanks to Lemma \ref{lgron}, we  conclude the proof of Lemma
\ref{lem_V2}, taking $r_2=\frac{r_1}2$.
\end{proof}
{\bf{Step 3}}: Estimates \eqref{vr} and \eqref{wr1} from Steps 1 and 2
are the starting point in this part. More precisely, we consider
$v_{r_2}=\phi_{r_2} v$  and
$w_{r_2}=\phi_{r_2} w$. Since
$r_2\le r_1$,
we write from
$(\ref{vr})$  and  $(\ref{wr1})$  for all $\tau \in [0, 1)$:
\begin{equation*}
\|v(\tau)\|_{L^\infty(B_{r_2})}\leq \frac{C\eps_0}{(1-\tau)^{C\eps_0^{p-1}}},
\end{equation*}
\begin{equation*}
\|w(\tau)\|_{L^\infty(B_{r_2})}\leq\displaystyle  \frac{C\eps_0}{(1-\tau)^{C\eps_0^{p-1}}}.
\end{equation*}
Plugging these estimates in  inequalities  $(\ref{duha_w})$ and
$(\ref{duha_v})$ (which both hold with $r$ and $r_1$ replaced by
$r_2$), taking $\epsilon_0$ small enough, we obtain  for all $\tau \in [0, 1)$,
\begin{eqnarray*}
\|v_{r_2}(\tau)\|_{L^\infty}&\leq &C\eps_0+C\eps_0^{p-1}\int_0^\tau (1-s)^{-C \eps_0^{p-1}(p-1)}\|v_{r_2}(s)\|_{L^\infty}ds,\\
\|w_{r_2}(\tau)\|_{L^\infty}&\leq &C\eps_0+C\eps_0^{p-1}\int_0^\tau (1-s)^{-C \eps_0^{p-1}(p-1)}\|w_{r_2}(s)\|_{L^\infty}ds.
\end{eqnarray*}
Taking   $\eps_0$       even smaller so that
$C_\eta \eps_0^{p-1}(p-1)<1$ and
using the Gronwall argument of 
Lemma \ref{lgron}, we deduce that  for all $\tau \in [0, 1)$,
 \begin{eqnarray*}
\|v_{r_2}(\tau)\|_{L^\infty}&\leq &M\eps_0,\\
\|w_{r_2}(\tau)\|_{L^\infty}&\leq &M\eps_0,
 \end{eqnarray*}
 for some $M>0$.
This concludes the proof of Proposition \ref{prop_single_blow_up} and part $1$ of Theorem \ref{main_thm} too.
\end{proof}
\section{Existence of the final profile}
In this section, we give the proof of part $2)$ of Theorem
\ref{main_thm}.

\bigskip

\begin{proof}[Proof of part 2) of Theorem \ref{main_thm}]
We consider $u$ a solution of equation $(\ref{eq_u})$ which blows up at the origin and only there in finite time $T>0$.
Adapting the method used by Merle \cite{merle} and Zaag \cite{zaag}, we prove the existence of a blow-up
final profile $u^\ast$ such that $u(x,t)\to u^\ast $ in $C^1$  of
every compact  of $\R^N\setminus \{0\}$.

\medskip

Consider $x_0\neq 0$ to be taken small enough later. The proof is performed in the
rescaled variable framework, $v(x_0, \xi, \tau)$ and $w(x_0, \xi, \tau)$ introduced in \eqref{v}, \eqref{w} and
\eqref{x_0}. Let us first put together the estimates we
already have:

\bigskip

\noindent\textbf{1-Initialization for $v$ and $w$ at $\tau=0$}.
From estimates $(\ref{b_u})$, $(\ref{b_nabla_u})$, the definitions
\eqref{v} and \eqref{w} of $v$ and $w$, we have the following:
\begin{equation}\label{b_v}
\displaystyle \sup_{|\xi|\leq 6 |\log(T-\t)|^{\frac{1}{4}}}|v(x_0, \xi, 0)-f(K_0)|\leq \frac{C}{|\log(T-\t)|^{\frac{1}{2}}},
\end{equation}
\begin{equation}\label{b_w}
\displaystyle \sup_{|\xi|\leq 6 |\log(T-\t)|^{\frac{1}{4}}}|w(x_0, \xi, 0)-\frac{1}{\sqrt{|\log(T-\t)|}} \nabla f(K_0)|\leq \frac{C}{|\log(T-\t)|^{\frac{1}{2}}},
\end{equation}
where $t_0(x_0)$ is defined in \eqref{x_0}.

\bigskip

\noindent\textbf{2-A rough bound on $v$ and $w$ for $\tau \in [0,1)$}.
We claim that for all $\tau \in [0,1)$ and $|\xi|\leq 6 |\log(T-\t)|^{\frac{1}{4}}$, we have
\begin{eqnarray}
\displaystyle (1\!-\!\tau)^{\frac{1}{p-1}}| v(x_0, \xi, \tau)|+
  \sqrt{1\!-\!\tau }\; | w(x_0, \xi, \tau)|&\leq&
                                                  f\left(\frac{K_0}{2}\right)+\frac{C}{\sqrt{|\log(T-\t)|}} |\nabla f(K_0)|\nonumber\\
                                                                                                         &+&\frac{C}{|\log(T-\t)|^{\frac{1}{2}}}\equiv\eps_1(K_0, x_0).\label{M0}
\end{eqnarray}
The proof is exactly the same as for the
calculation displayed in the proof of part 1 of Theorem
\ref{main_thm} on page \pageref{ProofTh1}, though we consider here
$\xi$ is some larger ball.

\bigskip

\noindent\textbf{3-A uniform bound on $v$ and $w$ for $\tau \in [0,1)$}.
Let us choose $K_0$ large enough and $|x_0| $ small enough such that
$\eps_1(K_0, x_0)<\bar \eps$, where $\bar \eps$
is the constant introduced in Proposition \ref{prop_single_blow_up}.\\
Applying that proposition,
we obtain
\begin{equation}\label{M1}
\sup_{|\xi|\leq 5|\log(T-\t)|^{\frac{1}{4}} }|v(x_0, \xi, \tau)|+|w(x_0, \xi, \tau)|\leq M\eps_1(K_0, x_0)\equiv M_1.
\end{equation}
Now, using the above
mentioned information,
we proceed in three steps:
\begin{itemize}
\item First, we prove that
  for all $\tau\in [0,1)$, $|\xi|
  \le 2|\log(T-\t)|^\frac{1}{4}$,
\begin{equation}\label{S1}
\displaystyle \|w(x_0,\xi, \tau)\|_{L^\infty}\leq
\frac{C}{|\log(T-\t)|^{\frac 14}}.
\end{equation}
\item Next, we prove that
  for all $\tau\in [0,1)$, $ |\xi|\le |\log(T-\t)|^\frac{1}{4}$,
  \begin{equation}\label{S2}
    \displaystyle \|v(x_0,\xi, \tau)-v_{K_0}(\tau)\|_{L^\infty}\leq
    \frac{C}{|\log(T-\t)|^{\frac 14}},
  \end{equation}
where $v_{K_0}(\tau)=((p-1)(1-\tau)+bK_0^2)^{-\frac{1}{p-1}}$ is the solution of the ordinary differential equation $v'_{K_0}(\tau)=v_{K_0}^p(\tau)$, with initial data $v_{K_0}(0)=f(K_0)$.
\item Finally, we use 
classical parabolic regularity and the definition of $\t$, to get the equivalent of $u^\ast$ as $x_0\to 0$.
\end{itemize}

{\bf{Step 1 : Smallness of $w$}}

 Arguing as in Proposition \ref{prop_single_blow_up}, we introduce
 $\phi$ a $C^\infty$ cut-off function, with $supp(\phi)\subset
 B(0,1)$, $0\leq \phi\leq 1$ and $\phi=1$ on $B(0,
 \frac{1}{2})$. We also introduce for every
 $r>0$,   $$\phi_r(\xi)=\displaystyle
 \phi(\frac{\xi}{r|\log(T-\t)|^{\frac{1}{4}}}).$$
Note that
\begin{equation}\label{nabla_phi}
\|\nabla \phi_r\|_{L^\infty}\leq \frac{C}{r|\log(T-\t)|^{\frac{1}{4}}},
\end{equation}
and \begin{equation}\label{delta_phi}
\|\Delta \phi_r\|_{L^\infty}\leq \frac{C}{r^2|\log(T-\t)|^{\frac{1}{2}}}.
\end{equation}
Introducing  for $r=4$, $w_4=\phi_4 w$,  and arguing as for
$(\ref{duha_w})$, taking the $L^\infty$-norm on the Duhamel equation satisfied by $w_2$,  we obtain  for all $\tau \in [0, 1)$,
\begin{eqnarray}
\|w_4(\tau)\|_{L^\infty}\!\!\!&\leq & \!\!\!\|w_4(0)\|_{L^\infty}\!+\!C\!\int_0^\tau\!\! \|\Delta \phi_4\|_{L^\infty} \|w\|_{L^\infty(B)}\!+\!C\!\int_0^\tau\!\! (\tau-s)^{-\frac{1}{2}}\|\nabla \phi_4\|_{L^\infty}\|w\|_{L^\infty(B)}\nonumber\\
&&+p\int_0^\tau \|v\|_{L^\infty(B)}^{p-1}\|w_{4}\|_{L^\infty}\nonumber\\
 &&+C\mu (T-\t)^\gamma\int_0^\tau (\tau -s)^{-\frac{1}{2}} \|w\|_{L^\infty(B)}\displaystyle \|\int_{B_0}|v|^{q-1}\|_{L^\infty(B)}\nonumber\\
 &&+C\mu (T-\t)^\gamma \int_0^\tau \|\nabla \phi_4\|_{L^\infty}\|w\|_{L^\infty(B)}\displaystyle \|\int_{B_0}|v|^{q-1}\|_{L^\infty(B)},
\end{eqnarray}
where
 $B_0$ already introduced
right after \eqref{gamma} is the ball of center $-\displaystyle 	K_0\sqrt{|\log(T-\t)|}\frac{x_0}{|x_0|}$ and radius $\displaystyle |\xi+K_0\sqrt{|\log(T-\t)|}\frac{x_0}{|x_0|}|, $  and $ B$ is the ball of center $0$ and radius 
        $4|\log(T-\t)|^{\frac{1}{4}} .$\\
Consider $\eta\in (0, \gamma)$.
Using \eqref{b_w},  $(\ref{M1})$, $(\ref{nabla_phi})$,
$(\ref{delta_phi})$  and \eqref{2.6} we deduce
for all $\tau \in [0, 1)$:
\begin{eqnarray*}
&&\|w_{4}(\tau)\|_{L^\infty}\leq \frac{C}{|\log(T-t_0)|^{\frac{1}{2}}}+\frac{CM_1}{|\log(T-\t)|^{\frac{1}{2}}}+ \frac{CM_1}{|\log(T-\t)|^{\frac{1}{4}}}\nonumber\\
 &&+C_\eta(T-\t)^{\gamma-\eta} M_1\Big(\int_0^\tau \big( (\tau -s)^{-\frac{1}{2}} +\frac{C}{|\log(T-\t)|^{\frac{1}{4}}}\big) (1-s)^{\gamma-\frac{1}{2}-\eta}ds\Big)\nonumber\\&&+ p M_1^{p-1}\int_0^\tau \|w_{4}\|_{L^\infty}.
\end{eqnarray*}
If we remark  that $M_1\le 1$  (see\eqref{M1} and \eqref{M0}) and
\begin{equation}\label{H_t}
\displaystyle (T-\t)^{\gamma-\eta}\leq \frac{C}{|\log(T-\t)|^{\frac 14}},
\end{equation}
 for
$x_0$ small enough and $K_0$ large enough,   
then by
Lemma \ref{lem_integ},  we obtain    for all $\tau \in [0, 1)$,
\begin{eqnarray*}
\|w_{4}(\tau)\|_{L^\infty}\leq \frac{C}{|\log(T-\t)|^{\frac 14}}+ p\int_0^\tau \|w_{4}(s)\|_{L^\infty}ds.
\end{eqnarray*}
Applying   Lemma \ref{lgron}, we deduce that  for all $\tau \in [0, 1)$,
\begin{eqnarray*}
\|w_{4}(\tau)\|_{L^\infty}\leq \frac{C}{|\log(T-\t)|^{\frac 14}}.
\end{eqnarray*}
Thus, $(\ref{S1})$ follows.

\bigskip

{\bf{Step 2 : Sharp behavior of $v$}}

Let
\begin{equation}\label{defvk0}
 v_{K_0}(\tau)=((p-1)(1-\tau)+bK_0^2)^{-\frac{1}{p-1}}
\end{equation}
  be the solution of the ordinary differential equation $v'_{K_0}(\tau)=v_{K_0}^p(\tau)$, with initial data $v_{K_0}(0)=f(K_0)$.\\
Introducing $\psi = v-v_{K_0}$ and
$\psi_2=\phi_2\psi$, we see from equation \eqref{eq_v}  that
$\psi_2$ satisfies the following equation   for all $\xi \in \R$ and for all $\tau \in [0, 1)$:
\begin{eqnarray*}
 \partial_{\tau}\psi_{2}=\Delta \psi_{2} +\psi\Delta \phi_{2}-2\nabla (\psi\nabla \phi_{2})+a \psi_2+\mu (T-\t)^\gamma \phi_{2}|\nabla v|\displaystyle \int_{B_0}|v|^{q-1},
\end{eqnarray*}
where \begin{equation*}
a(x_0, \xi , \tau)=\left\{ \begin{array}{cc}
\displaystyle \frac{|v|^{p-1}v-v_{K_0}^p}{v-v_{K_0}},\quad \mbox{if}\; v\neq v_{K_0},\\
pv_{K_0}^{p-1}\quad \mbox{otherwise}.
\end{array}\right.
\end{equation*}
Using \eqref{M1}, \eqref{M0} and \eqref{defvk0}, we see that
\[
\sup_{|\xi|\le 2|\log(T-t_0(x_0))|^{\frac 14}, \tau\in[0,1)}|a(x_0, \xi , \tau)|\equiv\eta_2\to 0, 
\mbox{ as }|x_0|\to 0\mbox{ and }K_0\to +\infty.
\]
Taking the $L^\infty$-norm on the Duhamel equation satisfied by
$\psi_2$, and using  estimates \eqref{b_v}, \eqref{2.6}, $(\ref{M1})$, $(\ref{nabla_phi})$ and  $(\ref{delta_phi})$, we get  for all $\tau \in [0, 1)$,
\begin{eqnarray*}
\|\psi_{2}(\tau)\|_{L^\infty}&\leq &\frac{C}{|\log(T-t_0)|^{\frac{1}{2}}}+\frac{CM_1}{|\log(T-\t)|^{\frac{1}{2}}}+ \frac{CM_1}{|\log(T-\t)|^{\frac{1}{4}}}\nonumber\\
 &&+C_\eta(T-\t)^{\gamma-\eta} M_1\int_0^\tau (1-s)^{\gamma-\frac{1}{2}-\eta}ds+ C\eta_2 \int_0^\tau \|\psi_{2}(s)\|_{L^\infty}ds
\end{eqnarray*}
Using \eqref{H_t},
we obtain   for all $\tau \in [0, 1)$,
\begin{eqnarray*}
\|\psi_{1}(\tau)\|_{L^\infty}&\leq &\frac{C}{|\log(T-t_0)|^{\frac 14}}+ C\eta_2 \int_0^\tau \|\psi_{1}\|_{L^\infty}
\end{eqnarray*}
By Lemma \ref{lgron}, we deduce that  for all $\tau \in [0, 1)$,
\begin{eqnarray*}
\|\psi_{1}(\tau)\|_{L^\infty}\leq \frac{C}{|\log(T-\t)|^{\frac 14}}.
\end{eqnarray*}
This concludes   the proof of $(\ref{S2})$.

\bigskip

{\bf Step 3 : Conclusion of the proof of Part 2 of Theorem
    \ref{main_thm}}

  Using  $(\ref{S1})$, $(\ref{S2})$ and classical parabolic regularity, we see that
$$\forall \tau \in [\frac{1}{2}, 1), \;\; |\xi|\leq
\frac{1}{2}|\log(T-\t)|^{\frac{1}{4}}, \;\;|\partial_\tau v(x_0, \xi ,
\tau)|+|\partial_\tau w(x_0, \xi , \tau)|\leq C.$$
Therefore, 
we have the existence of limits for $v(x_0, 0, \tau)$ and $w(x_0, 0,
\tau)$ as $\tau\to 1$ for $|x_0|$ sufficiently small, on the one hand.\\
On the other hand, since neither $u$ nor $\nabla u$ blow up outside the
origin, using again classical parabolic regularity, we derive the
existence of a limiting profile $u^*$ such that $u(x,t) \to u^*(x)$ as
$t\to T$, in $C^1$ of every compact of $\R^N\backslash \{0\}$ (see
Merle \cite{merle} for a similar argument).\\
Letting $\tau \to 1$ in $(\ref{S1})$ and $(\ref{S2})$, and using the definitions $(\ref{v})$ and $(\ref{scaling})$,  we have
$$\displaystyle u^\ast(x_0)=\lim_{t\to T}u(x_0, t)= \lim_{\tau \to 1}\frac{v(x_0,0, \tau)}{(T-\t)^{\frac{1}{p-1}}}\sim (bK_0^2)^{-\frac{1}{p-1}}(T-\t)^{-\frac{1}{p-1}}, \; \mbox{as}\; x_0\to 0,$$
and
\begin{eqnarray*}
\displaystyle |\nabla u^\ast(x_0)|&=&|\lim_{t\to T} \nabla u(x_0, t)|=|\lim_{\tau \to 1}\frac{w(x_0,0, \tau)}{(T-\t)^{\frac{1}{p-1}+\frac{1}{2}}}|\\
&\leq& \frac{C}{|\log(T-\t)|^{\frac 14}(T-\t)^{\frac{1}{p-1}+\frac{1}{2}}}.
\end{eqnarray*}
From  the definition $(\ref{x_0})$ of $\t$, we have
$$\log|x_0|\sim\frac{1}{2}\log(T-\t),$$ and
$$T-\t\sim \displaystyle \frac{|x_0|^2}{2K_0^2|\log(x_0)|} ,\;\mbox{as}\; x_0\to 0.$$
Hence, $$u^\ast (x_0)\sim \left(\frac{b|x_0|^2}{2|\log(x_0)|}\right)^{-\frac{1}{p-1}} ,\;\mbox{as}\; x_0\to 0,$$
and $$\displaystyle |\nabla u^\ast(x_0)|\leq 
  |x_0|^{-\frac{p+1}{p-1}}|\log|x_0||^{\frac {p+3}{4(p-1)}}.$$
  This concludes   the proof of part 2) of Theorem \ref{main_thm}.
\end{proof}

\noindent\textbf{Address}:\\
- Department of Mathematics, Faculty of Sciences
of Sfax, BP 1171, Sfax 3000, Tunisia. \\
\vspace{-7mm}
\begin{verbatim}
 e-mail: bouthaina.abdelhedi@fss.usf.tn
\end{verbatim}
- Universit\'e Sorbonne Paris Nord, Institut Galil\'ee, 
Laboratoire Analyse, G\'eom\'etrie et Applications, CNRS UMR 7539,
99 avenue J.B. Cl\'ement, 93430 Villetaneuse, France.\\
\vspace{-7mm}
\begin{verbatim}
e-mail: Hatem.Zaag@univ-paris13.fr
\end{verbatim}
\end{document}